\newtheorem{theorem}{Theorem}
\newtheorem{corollary}{Corollary}
\newtheorem{lemma}{Lemma}
\newtheorem{example}{Example}
\begin{document}

\title{Non-PORC behaviour of a class of descendant $p$-groups.}
\author{Marcus du\ Sautoy and Michael Vaughan-Lee\footnote{
The second author was partially supported by CIRM-FBK, Trento}\\
%EndAName
Mathematical Institute\\
24-29 St Giles\\
Oxford OX1 3LB\\
dusautoy@maths.ox.ac.uk\\
michael.vaughan-lee@chch.ox.ac.uk\\
}
\maketitle

\section{Introduction}

In \cite{duS1} the first author introduced the following nilpotent
group $G$ given by the presentation: 
\[
G=\left\langle 
\begin{array}{c}
x_{1},x_{2},x_{3},x_{4},x_{5},x_{6},y_{1},y_{2},y_{3}:\left[
x_{1},x_{4}\right] =y_{3},\left[ x_{1},x_{5}\right] =y_{1},\left[
x_{1},x_{6}\right] =y_{2} \\ 
\left[ x_{2},x_{4}\right] =y_{1},\left[ x_{2},x_{5}\right] =y_{3},\left[
x_{3},x_{4}\right] =y_{2},\left[ x_{3},x_{6}\right] =y_{1}
\end{array}
\right\rangle 
\]
where all other commutators are defined to be 1.

The group $G$ is a Hirsch length $9$, class two nilpotent group. This group turned out 
to have some fascinating properties especially in its local behaviour with respect to 
varying the prime $p$. In particular it was key to revealing that zeta functions that 
can be associated with nilpotent groups have a behaviour that mimics the arithmetic 
geometry of elliptic curves.

Given that this group has the arithmetic of the elliptic curve 
\[
E=Y^{2}-X^{3}+X
\] 
embedded into its structure it is interesting to explore other group theoretic features 
which reflect this arithmetic. The presentation can be refined to define a group $G_p$ which 
is a finite $p$-group of exponent $p$ and order $p^9$. It turns out that the automorphism 
group of $G_p$ depends very irregularly on $p$, again reflecting the arithmetic of the 
underlying elliptic curve. This impacts very interestingly on the number of immediate 
descendants of $G_p$. (These are the class 3 groups $K$ such that $K/\gamma _{3}(K)$ is
isomorphic to $G_p$.) Immediate descendants of $G_p$ are either of 
order $p^{10}$ or $p^{11}$. For $p>3$ the number of descendants of exponent $p$ with 
order $p^{10}$ is described by the following:

\begin{theorem} Let $D_p$ be the number of descendants of $G_p$ of order $p^{10}$ and
exponent $p$. Let $V_p$ be the number of solutions $(x,y)$ 
in $\mathbb{F}_{p}$ that satisfy $x^4+6x^2-3=0$ and $y^2=x^3-x$.
\begin{enumerate}
\item If $p=5 \mod{12}$ then $D_p=(p+1)^2/4+3$. 
\item If $p=7 \mod{12}$ then $D_p=(p+1)^2/2+2$.
\item If $p=11 \mod{12}$ then $D_p=(p+1)^2/6+(p+1)/3+2$.
\item If $p=1 \mod{12}$ and $V_p=0$ then $D_p=(p+1)^2/4+3$.
\item If $p=1 \mod{12}$ and $V_p\not=0$ then $D_p=(p-1)^2/36+(p-1)/3+4$.
\end{enumerate}
\end{theorem}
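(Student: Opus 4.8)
The plan is to translate the problem into Lie algebra cohomology and orbit counting. Since $p>3$ and $G_p$ has exponent $p$ and class $2$, the Lazard correspondence identifies $G_p$ with a nilpotent $\mathbb{F}_p$-Lie algebra $L = L_1\oplus L_2$, where $L_1 = \langle x_1,\dots,x_6\rangle$ and $L_2 = [L,L] = \langle y_1,y_2,y_3\rangle$; moreover every class-$3$ Lie algebra has class $<p$, so the correspondence also identifies the exponent-$p$ descendants of order $p^{10}$ with the one-dimensional central extensions $0\to\mathbb{F}_p\to\widetilde L\to L\to 0$ for which $\mathbb{F}_p=\gamma_3(\widetilde L)$. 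Such extensions are classified by the class-increasing part of $H^2(L,\mathbb{F}_p)$, and two of them give isomorphic groups exactly when they lie in the same orbit of $\operatorname{Aut}(L)$ acting on $\mathbb{P}(H^2(L,\mathbb{F}_p))$. Thus $D_p$ is the number of $\operatorname{Aut}(L)$-orbits of lines $[\omega]$ in $H^2(L,\mathbb{F}_p)$ whose restriction to $L_2\times L_1$ is non-zero.

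First I would compute $H^2(L,\mathbb{F}_p)$ explicitly. Writing a cocycle as $\omega=\omega_{11}+\omega_{12}+\omega_{22}$ according to the decomposition of $\Lambda^2 L^{*}$, the cocycle identity forces $\omega_{22}=0$, leaves $\omega_{11}\in\Lambda^2 L_1^{*}$ unconstrained, and requires $\omega_{12}$ (a bilinear form on $L_2\times L_1$) to satisfy exactly the Jacobi-compatibility conditions; modulo the three-dimensional space of coboundaries one obtains $H^2(L,\mathbb{F}_p)\cong R_2^{*}\oplus N^{*}$, where $R_2=\ker(\Lambda^2 L_1\to L_2)$ has dimension $12$ and $N$, the nucleus, is spanned by the two surviving degree-$3$ brackets and has dimension $2$. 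This is why the immediate descendants have order $p^{10}$ or $p^{11}$. The crucial reduction is by the unipotent group of central automorphisms $x_i\mapsto x_i+\varphi(x_i)$, $\varphi\in\operatorname{Hom}(L_1,L_2)$: these fix the component $\omega_{12}$ and translate the component $\omega_{11}$ by a vector depending linearly on $\varphi$ and on $\omega_{12}$, so that after quotienting by them I expect the moduli to collapse to an essentially two-dimensional family. This two-dimensionality would be the source of the quadratic main terms $(p\pm1)^2$.

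The residual symmetries are the graded automorphisms: pairs $(g,h)\in\operatorname{GL}(L_1)\times\operatorname{GL}(L_2)$ with $h[a,b]=[ga,gb]$, together with the possible interchange of $A=\langle x_1,x_2,x_3\rangle$ and $B=\langle x_4,x_5,x_6\rangle$, which are the only parts that bracket non-trivially. Encoding the commutator as a three-dimensional net of $3\times3$ matrices, a general member $aM_1+bM_2+cM_3$ has determinant $-a^3+ac^2-b^2c$, whose zero locus is projectively the curve $E:Y^2=X^3-X$; the graded automorphisms are precisely the linear auto-equivalences of this determinantal representation, so their number and structure are governed by $\operatorname{Aut}(E)$ and the $\mathbb{F}_p$-rational data of $E$. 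Since $E$ has $j$-invariant $1728$ and complex multiplication by $\mathbb{Z}[i]$, whether the extra symmetries are defined over $\mathbb{F}_p$ is controlled by the presence of $\sqrt{-1}$ and of a primitive cube root of unity, i.e.\ by $p\bmod4$ and $p\bmod3$; this produces the split into residues modulo $12$, the order of the effective group being $4$, $2$ or $6$ in cases (1)--(3). For $p\equiv1\bmod12$ the invariant $V_p$ detects the further automorphisms attached to the $\mathbb{F}_p$-points of $E$ lying above the roots of $x^4+6x^2-3$: when $V_p=0$ these are not rational and the group again has order $4$, whereas when $V_p\ne0$ they enlarge it to order $36$ and simultaneously change the relevant point-count from $(p+1)^2$ to $(p-1)^2$, separating cases (4) and (5).

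With the effective group and its action on the two-dimensional moduli determined, the final step is a Burnside/orbit count. Dividing the $(p+1)^2$ (respectively $(p-1)^2$) points by the generic stabiliser gives the leading term, the elements with a positive-dimensional fixed locus contribute the linear corrections $(p+1)/3$ or $(p-1)/3$, and a careful census of the finitely many short orbits supplies the additive constants $+3,+2,+4$, and so on. The hard part throughout will be the explicit determination of the graded automorphism group and of its exceptional orbits: these are genuinely arithmetic, and establishing which auto-equivalences of the net are rational in each residue class---in particular tying the transition between cases (4) and (5) to the vanishing of $V_p$---is where the arithmetic of $E$ must be pinned down rather than estimated.
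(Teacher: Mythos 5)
Your overall strategy coincides with the paper's: Lazard correspondence, classification of the class-three central extensions via the two-dimensional nucleus inside the $14$-dimensional multiplicator (your $H^2(L,\mathbb{F}_p)\cong R_2^{*}\oplus N^{*}$ is the exact dual of the covering-algebra computation of Section 8), reduction by the unipotent automorphisms to a three-parameter family, and a Burnside count of orbits of a finite-modulo-scalars group acting on $\mathbb{F}_p^3$. The framework is sound and the group orders $4,2,6,4,36$ you quote are the correct ones. The gap lies in the step carrying all of the arithmetic content: the exact determination of the relevant automorphism group of $L_p$ and the proof that the list is complete. The mechanism you propose for the mod-$12$ case division is not correct as stated: you attribute the extra symmetries to the rationality of $\operatorname{Aut}(E)$ and of a primitive cube root of unity, but for $p\equiv7\pmod{12}$ a primitive cube root of unity exists and the group has order $2$, while for $p\equiv11\pmod{12}$ it does not exist and the group has order $6$. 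What actually governs the non-diagonal automorphisms is whether $3$ is a square mod $p$ (i.e.\ $p\equiv\pm1\pmod{12}$) and, when $p\equiv1\pmod{12}$, whether the system $d^4+6d^2-3=0$, $1-d^2+de^2=0$ has solutions --- which is precisely $V_p\neq0$ and, by Theorem 2, is not a congruence condition. Extracting these two equations, and the counts of $32$ versus $4$ such automorphisms, is done in the paper by the explicit elimination of Section 7; your appeal to auto-equivalences of the determinantal representation is a heuristic that predicts the wrong case structure.

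Two further points are needed before the orbit count is a proof rather than a plan. Burnside's lemma applied to a proper subgroup strictly overcounts orbits, so you must show there are no automorphisms beyond those listed; this is the content of Sections 6 and 7 (every $3$-dimensional abelian subalgebra of $V$ lies in the $\mathrm{GL}(2,p)$-orbit of $\langle x_1,x_2,x_3\rangle$, the two $3\times3$ blocks of an automorphism in $H$ are forced to be proportional, and the six polynomial constraints then force $d^4+6d^2-3=0$ and determine $a,b,c,f$ from $d,e$). And the constants $+3,+2,+4$ and the passage from $(p+1)^2$ to $(p-1)^2$ do not come from dividing a point count by a generic stabiliser; they emerge only from the explicit fixed-vector census (eigenvector counts $p^3-1$, $p^2+p-2$, $3p-3$, $p-1$ for the various matrices) in the Burnside sum. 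None of this invalidates your approach --- it is the paper's approach --- but these computations are the theorem, not preliminaries to it.
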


\begin{theorem}\label{Main Theorem}
There are infinitely many primes $p=1\mod{12}$ for which $V_p>0$. However there is 
no sub-congruence of $p=1\mod{12}$ for which $V_p>0$ for all $p$ in that sub-congruence class.
\end{theorem}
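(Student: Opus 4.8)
The plan is to read $V_p$ as a point count for the zero-dimensional $\mathbb{Q}$-scheme $S$ cut out by $f(x):=x^{4}+6x^{2}-3=0$ and $y^{2}=x^{3}-x$, and to govern those points by the Chebotarev density theorem. First I would record the basic geometry: $f$ is an irreducible biquadratic, and its roots $0,\pm1$ of $x^{3}-x$ are not roots of $f$, so $x^{3}-x$ never vanishes at a root of $f$; hence $S(\overline{\mathbb{Q}})$ consists of exactly the $8$ points $(\rho,\pm\sqrt{\rho^{3}-\rho})$ as $\rho$ runs over the four roots of $f$, and $S$ is a finite \'etale $\mathbb{Q}$-algebra of dimension $8$. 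Writing $M$ for the splitting field of $f$, $L\supseteq M$ for the Galois closure of the field of definition of the points of $S$, and $G=\mathrm{Gal}(L/\mathbb{Q})$ acting on $\Omega=S(\overline{\mathbb{Q}})$, the standard dictionary between points of a finite \'etale algebra and Frobenius orbits gives, for every unramified $p$, the identity $V_{p}=\#\{P\in\Omega:\mathrm{Frob}_{p}\,P=P\}$. Thus $V_{p}>0$ if and only if a Frobenius element at $p$ has a fixed point on $\Omega$, and I would state this identity carefully (including the harmless exclusion of finitely many ramified primes), since it is what converts the question into one about fixed points of Frobenius.

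For the first assertion I would take the primes that split completely in $L$: for such $p$ the Frobenius is trivial, fixes all of $\Omega$, and so $V_{p}=8>0$. Moreover $\sqrt{3}=\tfrac12(\rho^{2}+3)\in M$ and $\sqrt{-3}=\rho\rho'\in M$ for the two roots $\rho,\rho'$ with distinct squares, so $i\in M$ and hence $\mathbb{Q}(\zeta_{12})=\mathbb{Q}(\sqrt3,i)\subseteq M\subseteq L$; therefore every such $p$ satisfies $p\equiv1\bmod12$. By Chebotarev the split-completely primes have density $1/[L:\mathbb{Q}]>0$, giving infinitely many primes $p\equiv1\bmod12$ with $V_{p}>0$.

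The heart of the matter is the second assertion, and here I would work entirely at the level of $M$. The key input is that $\mathrm{Gal}(M/\mathbb{Q})\cong D_{4}$ with maximal abelian subextension $M\cap\mathbb{Q}^{\mathrm{ab}}=\mathbb{Q}(\zeta_{12})$: one has $[M:\mathbb{Q}]=8$ (since $\beta=\sqrt{-3}/\alpha\notin\mathbb{Q}(\alpha)\subset\mathbb{R}$), the subfield $\mathbb{Q}(\zeta_{12})$ lies in $M$ with index $2$, and $M/\mathbb{Q}$ is non-abelian because $\mathbb{Q}(\alpha)$ is not normal; these force $M^{\mathrm{ab}}=\mathbb{Q}(\zeta_{12})$. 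The nontrivial element $\nu$ of $\mathrm{Gal}(M/\mathbb{Q}(\zeta_{12}))$ fixes $\sqrt3$ and hence each $\rho^{2}$, so $\nu\rho=\pm\rho$; it cannot fix any root (else it would fix all of $M$), so $\nu$ acts on the four roots as the fixed-point-free involution $\rho\mapsto-\rho$. Consequently, whenever $\mathrm{Frob}_{p}|_{M}=\nu$ the quartic $f$ has no root in $\mathbb{F}_{p}$, and therefore $V_{p}=0$. Now fix any arithmetic progression $a\bmod N$ consisting of primes $\equiv1\bmod12$ (so $12\mid N$ and $a\equiv1\bmod12$). Since $M\cap\mathbb{Q}(\zeta_{N})=\mathbb{Q}(\zeta_{12})$, the fields $M$ and $\mathbb{Q}(\zeta_{N})$ are linearly disjoint over $\mathbb{Q}(\zeta_{12})$, and as both $\nu$ and the Artin symbol $\sigma_{a}$ restrict to the identity on $\mathbb{Q}(\zeta_{12})$, the group $\mathrm{Gal}(M\mathbb{Q}(\zeta_{N})/\mathbb{Q})$ contains an element with $M$-part $\nu$ and $\mathbb{Q}(\zeta_{N})$-part $\sigma_{a}$. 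Chebotarev then produces infinitely many primes $p\equiv a\bmod N$ with $\mathrm{Frob}_{p}|_{M}=\nu$, each satisfying $V_{p}=0$; hence no sub-congruence of $p\equiv1\bmod12$ can force $V_{p}>0$.

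The main obstacle will be the explicit Galois-theoretic input of the third paragraph: verifying that the splitting field of $f$ is genuinely of degree $8$ with group $D_{4}$ (so that the relevant condition is non-abelian and $M^{\mathrm{ab}}$ is exactly $\mathbb{Q}(\zeta_{12})$), and checking that the central involution acts without fixed points on the roots. Everything else reduces to two applications of Chebotarev, one to the split-completely primes in $L$ and one to the fibre product $\mathrm{Gal}(M\mathbb{Q}(\zeta_{N})/\mathbb{Q})$; the non-abelianness of $M/\mathbb{Q}$ is precisely what makes the fixed-point-free class $\nu$ available inside \emph{every} cyclotomic refinement of $p\equiv1\bmod12$, and hence what produces the non-PORC dichotomy between cases $(4)$ and $(5)$ of Theorem~1.
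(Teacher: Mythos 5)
Your proposal is correct, and for the first assertion it coincides with the paper's argument: both count the primes that split completely in the relevant splitting field, note they have positive Dirichlet density, and observe they all lie in the class $1 \bmod 12$ because that field contains $\mathbb{Q}(\sqrt{3},i)=\mathbb{Q}(\zeta_{12})$ (the paper phrases this through the degree-$16$ splitting field of the resolvent octic $y^{8}+360y^{4}-48$, which by its Lemma~1 cuts out the same points as your scheme $S$). For the second, harder, assertion the two routes genuinely diverge. The paper first collapses the system to that single octic, then proves by an explicit application of Eisenstein's quadratic reciprocity law in $\mathbb{Z}[\sqrt{3}]$ that solvability of $z^{4}+360z^{2}-48=0$ in $\mathbb{F}_{p}$ is equivalent to $a\equiv 1\bmod 3$ when $p=a^{2}-12b^{2}$, and finally invokes Rademacher's Dirichlet theorem for $\mathbb{Q}(\sqrt{3})$ to manufacture, inside any prescribed class $c\bmod 12d$, a prime whose representation has $a\equiv 2\bmod 3$. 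You instead stay with the quartic $x^{4}+6x^{2}-3$, identify its splitting field $M$ as a $D_{4}$-extension with $M\cap\mathbb{Q}^{\mathrm{ab}}=\mathbb{Q}(\zeta_{12})$, observe that the central involution $\nu$ acts on the four roots without fixed points, and apply Chebotarev to the compositum $M\mathbb{Q}(\zeta_{N})$ to realize $\mathrm{Frob}_{p}|_{M}=\nu$ inside any refinement of $p\equiv 1\bmod 12$; it is worth recording explicitly that $(\nu,\sigma_{a})$ is central in the fibre product, so it is a full conjugacy class and the ambiguity in $\mathrm{Frob}_{p}$ is harmless. Your argument is softer and shorter, needs only the quartic rather than the octic, and makes the mechanism transparent --- the obstruction to PORC behaviour is precisely the non-abelianness of $M/\mathbb{Q}$, i.e.\ the failure of the commutator subgroup of $D_{4}$ to be trivial --- at the price of invoking the full Chebotarev density theorem. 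The paper's argument is more computational but yields an explicit, checkable criterion for $V_{p}>0$ in terms of $p=a^{2}-12b^{2}$, which is of independent interest and is what feeds the experimental observations in Section~5.
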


This theorem has an impact on Higman's PORC conjecture, which relates to the form of the
function $f(p,n)$ giving the number of non-isomorphic $p$-groups of order $p^n$.
(We will give a full statement of the conjecture and some of its history in Section 2.)

\begin{corollary}
The number of immediate descendants of $G_p$ of order $p^{10}$ and exponent $p$ is not PORC.
\end{corollary}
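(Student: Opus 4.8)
The plan is to argue by contradiction and to deduce everything from Theorems 1 and~\ref{Main Theorem}. Recall that a function of $p$ is PORC when there is a modulus $N$ and polynomials $g_a(p)$, one for each residue $a$ modulo $N$, such that the function equals $g_a(p)$ at every prime $p\equiv a\pmod N$. So I would assume $D_p$ is PORC with modulus $N$; after replacing $N$ by $\operatorname{lcm}(N,12)$ I may take $12\mid N$, so that any residue class modulo $N$ meeting the congruence $1\pmod{12}$ lies entirely inside it. The one computation I need is that the two polynomials appearing in cases (4) and (5),
\[
f_4(p)=\tfrac{(p+1)^2}{4}+3,\qquad f_5(p)=\tfrac{(p-1)^2}{36}+\tfrac{p-1}{3}+4,
\]
are distinct, their leading coefficients being $\tfrac14$ and $\tfrac1{36}$; hence they agree at only finitely many integers, a finite set I will call $E$.

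Next I would establish a dichotomy on each residue class $a\pmod N$ with $a\equiv1\pmod{12}$, writing $g_a$ for the PORC polynomial on that class. For every prime $p\equiv a\pmod N$ the assumption gives $D_p=g_a(p)$, while Theorem 1 gives $D_p=f_4(p)$ when $V_p=0$ and $D_p=f_5(p)$ when $V_p>0$; thus $g_a(p)\in\{f_4(p),f_5(p)\}$ at every such prime. Since there are infinitely many primes $p\equiv a\pmod N$ by Dirichlet, one of the sets on which $g_a$ agrees with $f_4$, respectively with $f_5$, is infinite, and so $g_a=f_4$ or $g_a=f_5$ identically (distinct polynomials meet only finitely often). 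Both cannot hold, as $f_4\neq f_5$. So each such class is of exactly one of two types: an $f_4$-type class, on which $V_p=0$ for every prime outside $E$, and an $f_5$-type class, on which $V_p>0$ for every prime outside $E$.

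Finally I would extract the contradiction. The first assertion of Theorem~\ref{Main Theorem}, that infinitely many primes $p\equiv1\pmod{12}$ satisfy $V_p>0$, together with the dichotomy forces at least one class $a\pmod N$ to be of $f_5$-type; on it $V_p>0$ for all primes except the finitely many lying in $E$. To clear these exceptions I would refine the modulus: choosing $M$ a multiple of $N$ large enough that some sub-class $a'\pmod M$ of $a\pmod N$ is coprime to $M$ and disjoint from $E$, I obtain a sub-congruence of $1\pmod{12}$ on which $V_p>0$ for every prime $p$. This contradicts the second assertion of Theorem~\ref{Main Theorem}.

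The real mathematical weight sits in Theorem~\ref{Main Theorem}, so the corollary is essentially a logical consequence; the only point needing care is the last step, converting the ``all but finitely many primes'' behaviour forced by the distinctness of $f_4$ and $f_5$ into a genuine sub-congruence on which $V_p>0$ without exception. I expect this refinement of the modulus to be the one place where a little bookkeeping is unavoidable, the remainder being the standard interplay of Dirichlet's theorem with the fact that two distinct polynomials agree only finitely often.
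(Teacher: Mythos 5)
Your argument is correct and is essentially the proof the paper intends: the corollary is stated without proof as an immediate consequence of Theorems 1 and 2, and your write-up simply makes explicit the standard deduction (two distinct polynomials agree at only finitely many points, Dirichlet guarantees infinitely many primes per class, and Theorem 2 rules out any sub-congruence on which $V_p>0$ throughout). The one piece of bookkeeping you flag as unavoidable is in fact vacuous here, since $f_4(p)-f_5(p)=\tfrac{2}{9}(p+2)(p-1)$, so the exceptional set $E$ contains no primes and the final refinement of the modulus can be dispensed with.
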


\begin{corollary}
The number of immediate descendants of $G_p$ of order $p^{10}$ is not PORC.
\end{corollary}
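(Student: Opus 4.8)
The plan is to deduce Corollary 2 from Corollary 1 by showing that the descendants counted in Corollary 2 but not in Corollary 1 contribute a PORC term. Every immediate descendant of $G_p$ of order $p^{10}$ has exponent either $p$ or $p^2$. Write $N_p$ for the total number of order-$p^{10}$ descendants and $E_p$ for the number of those of exponent $p^2$, so that
\[
N_p = D_p + E_p .
\]
The PORC functions of $p$ form a class closed under addition and subtraction: if $f$ is polynomial on residue classes modulo $M$ and $g$ modulo $N$, then $f\pm g$ is polynomial on residue classes modulo $\mathrm{lcm}(M,N)$. Hence, provided $E_p$ is PORC, the equation $D_p = N_p - E_p$ shows that $N_p$ being PORC would force $D_p$ to be PORC, contradicting Corollary 1. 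The whole corollary therefore reduces to the single assertion that $E_p$ is PORC.

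To establish that $E_p$ is PORC I would count immediate descendants using the $p$-group generation machinery. The immediate descendants of $G_p$ of order $p^{10}$ are in bijection with the $\mathrm{Aut}(G_p)$-orbits on the set of allowable index-$p$ subgroups $U$ of the $p$-multiplicator $M(G_p)$, and among these the descendant $G_p^{*}/U$ has exponent $p^2$ exactly when $U$ fails to contain the subgroup of $M(G_p)$ generated by the images of the $p$-th powers of the generators. I would make the $\mathrm{Aut}(G_p)$-module structure of $M(G_p)$ explicit, isolate the directions coming from these $p$-th powers, and show that the orbit count on the hyperplanes omitting those directions is controlled only by coarse, PORC invariants of the action (dimensions of fixed and coinvariant spaces, together with the residue of $p$ modulo a fixed modulus), and not by the number of points of $E$.

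The main obstacle is exactly this separation. The group $\mathrm{Aut}(G_p)$ acting here is the same group whose irregular, elliptic-curve-driven behaviour produces the non-PORC count of Theorem 1, so I cannot simply invoke regularity of the acting group. Instead I would exhibit an $\mathrm{Aut}(G_p)$-equivariant fibration of the exponent-$p^2$ allowable subgroups over the exponent-$p$ data whose fibres have sizes depending only on $p$ through a fixed modulus; this confines the arithmetic of $y^2=x^3-x$ entirely to the exponent-$p$ descendants and leaves $E_p$ polynomial on residue classes. Once $E_p$ is shown to be PORC, Corollary 2 follows at once from Corollary 1 together with the closure of PORC functions under subtraction.
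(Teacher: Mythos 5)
There is a genuine gap, and it sits exactly where you locate the ``main obstacle.'' Your entire argument reduces the corollary to the assertion that $E_p$ (the number of order-$p^{10}$ descendants not of exponent $p$) is PORC, but you never establish this; you only describe a hoped-for equivariant fibration whose fibres would have PORC sizes, without constructing it. Worse, the claim is probably false: the exponent-$p^2$ descendants are counted as orbits of the \emph{same} automorphism group of $G_p$ whose order jumps from $4(p-1)$ to $36(p-1)$ precisely when $V_p\neq 0$, and the paper itself remarks that $E_p$ is ``more likely'' also lower when $V_p\neq 0$ --- i.e.\ the authors expect the elliptic-curve arithmetic to contaminate $E_p$ as well. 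There is also a structural worry with your proposed fibration: you want to fibre the exponent-$p^2$ data over the exponent-$p$ data with PORC fibres, but the orbit count on the base is exactly the non-PORC quantity $D_p$, so such a fibration would tend to propagate the non-PORC behaviour into $E_p$ rather than quarantine it.

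The paper's proof avoids your reduction entirely by using a one-sided comparison instead of an exact decomposition. One only needs that $E_p$ does not \emph{increase} when $V_p\neq 0$: enlarging the group acting on the relevant set of allowable subgroups can only merge orbits, so the orbit count for primes with $V_p\neq 0$ is at most that for primes with $V_p=0$. Since Theorem 1 gives a strict drop in $D_p$ (for large $p$ one has $(p-1)^2/36+(p-1)/3+4<(p+1)^2/4+3$), the total $D_p+E_p$ is strictly smaller on primes $p=1\mod{12}$ with $V_p\neq 0$ than on those with $V_p=0$. By Theorem 2 every admissible sub-congruence class of $1\mod{12}$ contains infinitely many primes of each kind, so no single polynomial can describe the total on any residue class, and the total is not PORC. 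If you replace ``$E_p$ is PORC'' by this monotonicity statement, your argument becomes essentially the paper's; as written, the key step is missing and likely unprovable.
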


\begin{proof}
Let $E_p$ be the number of descendants of $G_p$ of order $p^{10}$ which do
not have exponent $p$. Then the total number of descendants of order $p^{10}$ is
$D_p+E_p$. When $p=1 \mod{12}$ and $V_p\not=0$ then $D_p$ has a lower value than
when $p=1 \mod{12}$ and $V_p=0$. Similarly, the value of $E_p$ is either the same
when $V_p\not=0$ as it is when $V_p=0$, or (more likely) it is also lower. So,
either way, the total number of descendants of $G_p$ of order $p^{10}$ is lower
when $p=1 \mod{12}$ and $V_p\not=0$ than it is when $p=1 \mod{12}$ and $V_p=0$. 
\end{proof}

The authors are very grateful to Jan Denef, Noam Elkies, Roger Heath-Brown and
Hans Opolka for a number of helpful conversations about number theory.

\section{Background}

In  \cite{GSS} Grunewald, Segal and Smith introduced the notion of the zeta function of a
group $G$: 
\[
\zeta _{G}^{\leq }(s)=\sum_{H\leq G}|G:H|^{-s}=\sum_{n=1}^{\infty
}a_{n}^{\leq }(G)n^{-s} 
\]
where $a_{n}^{\leq }(G)$ denotes the number of subgroups of index $n$ in $G.$
The definition of this zeta function as a sum over subgroups makes it look
like a non-commutative version of the Dedekind zeta function of a number
field. They proved that for finitely generated, torsion-free nilpotent
groups the global zeta function can be written as an Euler product of local
factors which are rational functions in $p^{-s}:$%
\begin{eqnarray*}
\zeta _{G}^{\leq }(s) &=&\prod_{p\text{ prime}}\zeta _{G,p}^{\leq }(s) \\
&=&\prod_{p\text{ prime}}Z_{p}^{\leq }(p,p^{-s})
\end{eqnarray*}
where for each prime $p,$ $\zeta _{G,p}^{\leq }(s)=\sum_{n=0}^{\infty
}a_{p^{n}}^{\leq }(G)p^{-ns}$ and $Z_{p}^{\leq }(X,Y)\in \mathbb{Q}(X,Y).$

Similar definitions and results were also obtained for the zeta 
function $\zeta _{G}^{\triangleleft }(s)$ counting normal subgroups.

One of the major questions raised in the paper \cite{GSS} is the variation
with $p$ of these local factors $Z_{p}^{\leq }(X,Y).$ Many of the examples
showed a uniform behaviour as the prime varied. For example, if $G$ is the
discrete Heisenberg group 
\[
G=\left( 
\begin{array}{lll}
1 & \mathbb{Z} & \mathbb{Z} \\ 
0 & 1 & \mathbb{Z} \\ 
0 & 0 & 1
\end{array}
\right) 
\]
then for all primes $p$%
\[
\zeta _{G,p}^{\leq }=\frac{(1-p^{3-3s})}{
(1-p^{-s})(1-p^{1-s})(1-p^{2-2s})(1-p^{3-2s})}. 
\]

However, if one takes the Heisenberg group with entries now from some
quadratic number field then it was shown in \cite{GSS} that the local
factors $Z_{p}^{\triangleleft }(X,Y)$ counting normal subgroups depend on
how the prime $p$ behaves in the quadratic number field. The authors 
of \cite{GSS} were led by such examples and the analogy with the Dedekind zeta
function of a number field to ask whether the local factors always
demonstrated a Cebotarev density type behaviour, depending on the behaviour
of primes in number fields. In particular they speculated in \cite{GSS} that
it was `plausible' that the following question has a positive answer:

\textbf{Question}\textsl{\ Let }$G$\textsl{\ be a finitely generated
nilpotent group and }$*\in \{\leq ,\triangleleft \}$. \textsl{Do there exist
finitely many rational functions }$W_{1}(X,Y),\ldots ,W_{r}(X,Y)\in \mathbb{Q}%
(X,Y)$ \textsl{such that for each prime} $p$ \textsl{there is an} $i$ 
\textsl{for which } 
\[
\zeta _{G,p}^{*}(s)=W_{i}(p,p^{-s})? 
\]

If the answer is `yes' we say that the local zeta functions $\zeta
_{G,p}^{*}(s)$ of $G$ are \emph{finitely uniform}. If there is one rational
function $W(X,Y)$ such that $\zeta _{G,p}^{*}(s)=W(p,p^{-s})$ for almost all
primes then we say that the local zeta functions $\zeta _{G,p}^{*}(s)$ of $G$
are\emph{\ uniform}.

Grunewald, Segal and Smith elevated this question to a conjecture in the
case that $G$ is a free nilpotent group. In \cite{GSS} they confirmed the
conjecture in the case that $G$ is a free nilpotent group of class 2.

The question of the behaviour of these local factors has gained extra
significance in the light of recent work of the first author on counting the 
number $f(p,n)$ of non-isomorphic finite $p$-groups that exist of order $p^{n}$.
Higman's PORC conjecture \cite{Higman-Enumerating2} asserts that for 
fixed $n$, the number $f(p,n)$ is given by a polynomial in $p$ whose coefficients
depend on the residue class of $p$ modulo some fixed integer $N,$ 
(\textbf{P}olynomial \textbf{O}n \textbf{R}esidue \textbf{C}lasses). 
In \cite{duS-ERA} and \cite{duS-pgroups} it is explained how this conjecture is directly
related to whether certain local zeta functions attached to free nilpotent
groups are finitely uniform. Higman \cite{Higman-Enumerating2} proves that
for each $p$ and $n$ the number of groups of order $p^n$ which have Frattini
subgroups which are central and elementary abelian is PORC. A. Evseev \cite{evsee} has 
extended Higman's result to groups where the Frattini subgroup is central 
(and not necessarily elementary abelian). For $n \leq 7$ Higman's conjecture is 
known to hold true (see \cite{newobvl} and \cite{obrienvl2}). 

The examples of Grunewald, Segal and Smith hinted that the behaviour of the
local factors as one varied the prime would be related to the behaviour of
primes in number fields. However the work of the first author with 
Grunewald \cite{duSG-compte abscissa} and \cite{duSG-Abscissa} shows that 
this first impression is misplaced. The behaviour is rather governed by a 
different question, namely how the number of points $\mod{p}$ on a variety 
varies with $p$.

In \cite{duSG-compte abscissa} and \cite{duSG-Abscissa}, the first author and 
Grunewald show that for each finitely generated nilpotent group $G$ there exists 
an explicit system of subvarieties $E_{i}$ ($i\in T,$ $T$ finite) of a 
variety $Y$ defined over  $\mathbb{Z}$ and, for each subset $I$ of $T,$ a rational 
function $W_{I}(X,Y)\in \mathbb{Q}(X,Y)$ such that for almost all primes $p$%
\[
\zeta _{G,p}^{*}(s)=\sum_{I\subset T}c_{I}(p)W_{I}(p,p^{-s}) 
\]
where 
\[
c_{I}(p)=\mathrm{card}\{a\in Y(\mathbb{F}_{p}):a\in E_{i}(\mathbb{F}_{p})\text{ if
and only if }i\in I\}. 
\]

So the analogy with the Dedekind zeta function of a number field is too
simplistic, rather it is the Weil zeta function of an algebraic variety over 
$\mathbb{Z}$ that offers a better analogy.

In contrast to the behaviour of primes in number fields, the number of 
points $\mod{p}$ on a variety can vary wildly with the prime $p$ and
certainly does not have a finitely uniform description.

\begin{example}
\label{number of points on elliptic curve}(\cite{ir}, 18.4) Let $E$ be the
elliptic curve $E=Y^{2}-X^{3}+X$. Put 
\[
\left| E(\mathbb{F}_{p})\right| =\left| \left\{ (x,y)\in \mathbb{F}%
_{p}^{2}:y^{2}-x^{3}+x=0\right\} \right| . 
\]
If $p=3\mod{4}$ then $\left| E(\mathbb{F}_{p})\right|=p.$ 
However if $p=1\mod{4}$ then 
\[
\left| E(\mathbb{F}_{p})\right| =p-2a, 
\]
where $p=a^{2}+b^{2}$ and $a+ib=1\mod{(2+2i)}$.
\end{example}

(Note that $\left| E(\mathbb{F}_{p})\right| $ is one less than the value $N_{p}$
given in \cite{ir}, 18.4 since $N_{p}$ counts the number of points on the
projective version of $E$. This includes one extra point at infinity not
counted in the affine coordinates.)

However, despite this theoretical advance which moves the problem into the
behaviour of varieties $\mod{p}$, it was not clear still whether exotic
varieties like elliptic curves could arise in the setting of zeta functions
of groups. It might be that the question of Grunewald, Segal and Smith would
still have a positive answer since the varieties that arise out of the
analysis of the first author and Grunewald were always rational where the number of
points $\mod{p}$ is uniform in $p$.

The group defined at the beginning of this paper turned out to be the first example 
of a nilpotent group $G$ whose zeta function depends on the behaviour $\mod{p}$ of the 
number of points on the elliptic curve $E=Y^{2}-X^{3}+X$. To see where the elliptic
curve is hiding in this presentation, take the determinant of the $3\times 3$
matrix $(a_{ij})$ with entries $a_{ij}=\left[ x_{i},x_{j+3}\right]$. In \cite{duS1} 
the group is shown to provide a negative answer to the question of Grunewald,\ Segal 
and Smith:

\begin{theorem}
The local zeta functions $\zeta _{G,p}^{\leq }(s)$ and $%
\zeta _{G,p}^{\triangleleft }(s)$ are not finitely uniform.
\end{theorem}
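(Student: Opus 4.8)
The plan is to locate the elliptic curve $E$ inside the local zeta functions of $G$ as an honest contribution to the coefficient sequence, and then to play this off against the irregular variation of $|E(\mathbb{F}_p)|$ recorded in Example~\ref{number of points on elliptic curve}. Since $G$ is nilpotent of class $2$ with $G/\gamma_2(G)$ free abelian of rank $6$ and $\gamma_2(G)$ free abelian of rank $3$, the commutator map is encoded by the single matrix
\[
M=\begin{pmatrix} y_3 & y_1 & y_2\\ y_1 & y_3 & 0\\ y_2 & 0 & y_1 \end{pmatrix},\qquad M_{ij}=[x_i,x_{j+3}].
\]
A direct expansion gives $\det M=y_1y_3^2-y_1^3-y_2^2y_3$; dehomogenising ($y_3=1$) and substituting $y_1\mapsto -y_1$ turns the equation $\det M=0$ into $Y^2=X^3-X$, which is precisely $E$. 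This is the sense in which the arithmetic of $E$ is built into the presentation.

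Next I would set up, in the style of \cite{GSS}, the $p$-adic integrals computing $\zeta_{G,p}^{\triangleleft}(s)$ and $\zeta_{G,p}^{\le}(s)$. For a class-$2$ group the enumeration of (normal) subgroups of $p$-power index reduces to choosing a sublattice of $G/\gamma_2(G)$ together with a compatible sublattice of $\gamma_2(G)$ containing the image of the commutator pairing; the integrand is controlled by the elementary divisor type of the reduction of $M$ over $\mathbb{Z}_p$. Stratifying the integral according to the $p$-adic rank of $M$, the stratum on which $M$ drops to rank exactly $2$ — where $\det M\equiv 0$ but not every $2\times 2$ minor vanishes — contributes a factor counting the smooth $\mathbb{F}_p$-points of the projectivised determinant locus, and this count is an affine-linear function of $|E(\mathbb{F}_p)|$. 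Matched against the du~Sautoy--Grunewald description $\zeta_{G,p}^{*}(s)=\sum_{I\subset T}c_I(p)W_I(p,p^{-s})$, one of the coefficients $c_I(p)$ therefore equals $|E(\mathbb{F}_p)|$ up to a polynomial-in-$p$ correction, so that a suitable coefficient $a_{p^n}^{*}(G)$ of the zeta function has the shape $\alpha(p)\,|E(\mathbb{F}_p)|+\beta(p)$ with $\alpha,\beta$ polynomial in $p$ and $\alpha$ not identically zero.

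To conclude, suppose for contradiction that one of the local zeta functions were finitely uniform, realised by rational functions $W_1,\dots,W_r$. Extracting the coefficient above, $\alpha(p)\,|E(\mathbb{F}_p)|+\beta(p)$ must agree, for every $p$, with the corresponding coefficient of one of $W_1,\dots,W_r$; hence $|E(\mathbb{F}_p)|$ lies, for each $p$, in a fixed finite set of rational functions $\rho_1(p),\dots,\rho_r(p)$. Restricting to $p\equiv 1\bmod 4$, Example~\ref{number of points on elliptic curve} gives $|E(\mathbb{F}_p)|=p-2a$ with $p=a^2+b^2$, so $2a=p-\rho_{i(p)}(p)$ takes one of finitely many rational-function values. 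Any $\rho_i$ used for infinitely many primes must satisfy $p-\rho_i(p)=O(\sqrt p)$ along those primes, since $|2a|\le 2\sqrt p$; a rational function that is $O(\sqrt p)$ along an infinite sequence is bounded, and being the even integer $2a$ infinitely often it must be identically a constant, i.e. $a$ is constant on that prime set. Thus all but finitely many primes $p\equiv 1\bmod 4$ would have $a$ lying in a finite set $\{c_1,\dots,c_r\}$; but then each such $p$ satisfies $p=c_i^2+b^2$ for some $i$, giving at most $O(\sqrt x)$ such primes up to $x$, which cannot match the $\sim\tfrac12\,x/\log x$ primes $\equiv 1\bmod 4$ below $x$. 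This contradiction shows that neither $\zeta_{G,p}^{\le}(s)$ nor $\zeta_{G,p}^{\triangleleft}(s)$ is finitely uniform.

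I expect the middle step to be the real obstacle: carrying out the cone-integral computation for this specific nine-dimensional group and verifying that the rank-$2$ stratum genuinely contributes a term proportional to $|E(\mathbb{F}_p)|$ which survives into the final rational function, rather than cancelling against adjacent strata or being swallowed by the denominators of the $W_I$. This demands careful bookkeeping of the Smith normal form of $M$ over $\mathbb{Z}_p$ and a clean separation of the singular points of the determinant locus (where $M$ has rank $\le 1$), since only the smooth points of $E$ should feed the elliptic count.
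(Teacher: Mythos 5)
First, a point of reference: the paper does not prove this theorem itself; it is quoted from \cite{duS1}, where the zeta functions of $G$ are computed explicitly and shown to have the form $P_1(p,p^{-s})+\left|E(\mathbb{F}_p)\right|P_2(p,p^{-s})$ with $P_2\neq 0$. Your outline follows the same broad strategy as that reference. Two of your three steps are sound: the identification of the curve is correct (your matrix $M$ agrees with the paper's $(a_{ij})=([x_i,x_{j+3}])$, and $\det M=y_1y_3^2-y_1^3-y_2^2y_3$ does dehomogenise to $Y^2=X^3-X$ after $y_1\mapsto -y_1$), and the concluding step is a correct and rather clean argument — if a coefficient of the zeta function had the form $\alpha(p)\left|E(\mathbb{F}_p)\right|+\beta(p)$ with $\alpha$ a nonzero polynomial, then finite uniformity would force $a$ (in $p=a^2+b^2$, $p\equiv 1\bmod 4$) into a finite set for almost all such $p$, and the resulting $O(\sqrt{x})$ count of primes up to $x$ contradicts Dirichlet.

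The genuine gap is the middle step, and you have named it yourself: you assert, but do not prove, that some coefficient $c_I(p)$ in the du Sautoy--Grunewald decomposition is an affine-linear function of $\left|E(\mathbb{F}_p)\right|$ with nonvanishing linear part that survives into the final rational function. That assertion \emph{is} the theorem; everything else in your write-up is framing around it. Establishing it requires actually carrying out the cone-integral (or lattice-enumeration) computation for this specific group, stratifying by the elementary divisor type of $M$ over $\mathbb{Z}_p$, handling the singular locus of the determinantal cubic separately from its smooth points, and verifying that the elliptic contribution does not cancel between strata — this is the substance of the roughly twenty pages of \cite{duS1}, and no shortcut is offered here. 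As written, the proposal is a correct plan of attack with its load-bearing step left as a conjecture, so it does not constitute a proof.
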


\section{Arithmetic Geometry}

In this section we prove Theorem \ref{Main Theorem}. For the whole of this section 
we assume $p$ is a prime with $p=1 \mod{12}$.

\begin{lemma} There exists $x,y$ in $\mathbb{F}_p$ such that $x^4+6x^2-3=0$ and $y^2=x^3-x$ if 
and only if there exists $y \in \mathbb{F}_p$ satisfying $y^8+360y^4-48=0$.
\end{lemma}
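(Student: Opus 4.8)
The plan is to prove the two implications separately, eliminating $x$ explicitly in one direction and reconstructing $x$ from $y$ by an explicit rational formula in the other. The whole point is that the octic in $y$ is obtained by eliminating $x$, so it is automatically a \emph{necessary} condition; the work lies in showing it is also sufficient, and in controlling the one prime in the class $p \equiv 1 \mod{12}$ at which the elimination degenerates.

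For the forward implication, suppose $x,y \in \mathbb{F}_p$ satisfy $x^4 + 6x^2 - 3 = 0$ and $y^2 = x^3 - x$. First I would use the quartic relation in the form $x^4 = 3 - 6x^2$ to rewrite the higher powers of $x$ occurring in $y^4 = (x^3 - x)^2 = x^6 - 2x^4 + x^2$. Reducing modulo $x^4 = 3 - 6x^2$ collapses this to the linear expression $y^4 = 52x^2 - 24$. Substituting $y^4 = 52x^2 - 24$ into $y^8 + 360y^4 - 48$ then yields exactly $2704(x^4 + 6x^2 - 3)$, which vanishes. Equivalently, this is the polynomial identity that $(x^3 - x)^4 + 360(x^3 - x)^2 - 48$ is divisible by $x^4 + 6x^2 - 3$ in $\mathbb{Z}[x]$; I would verify this divisibility once and for all, and it requires no hypothesis on $p$.

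The reverse implication is the substantive part, and the key is to exhibit an explicit rational inverse. Given $y$ with $y^8 + 360y^4 - 48 = 0$, I would set
\[
x = \frac{52 y^2}{y^4 - 28}.
\]
Using $y^8 = -360y^4 + 48$ one checks that $x^2 = (y^4 + 24)/52$, after which the forward computation run backwards gives $x^4 + 6x^2 - 3 = \tfrac{1}{2704}(y^8 + 360y^4 - 48) = 0$. Finally $x^3 - x = x\big((y^4+24)/52 - 1\big) = x(y^4 - 28)/52 = y^2$ directly, with no sign ambiguity, so $(x,y)$ solves the original system. I would motivate this closed form by solving the two relations $y^4 = 52x^2 - 24$ and $y^2 = x(x^2 - 1)$ simultaneously for $x$.

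The main obstacle is justifying that this reconstruction is legitimate, which forces a careful treatment of the exceptional prime. The formula requires $52$ to be invertible and $y^4 \neq 28$; since $52 = 4 \cdot 13$, and since $y^4 = 28$ together with the octic would force $104^2 \equiv 0$, both failures occur only at $p = 13$ inside the class $p \equiv 1 \mod{12}$. I would therefore dispose of $p = 13$ by direct computation. There $x^2$ would have to be a root of $u^2 + 6u - 3$, i.e. $u \in \{2,5\}$, neither of which is a square modulo $13$, so the left-hand condition already fails; likewise, reducing coefficients, $y^4$ would have to satisfy $v^2 + 9v + 4 \equiv 0 \mod{13}$, forcing the double root $v = 2$, which is not a fourth power modulo $13$, so the right-hand condition fails as well. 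Hence both conditions are false at $p = 13$ and the equivalence holds there too, while for every other $p \equiv 1 \mod{12}$ the displayed rational map furnishes a clean correspondence between the two solution sets.
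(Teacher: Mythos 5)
Your proof is correct and takes essentially the same route as the paper: the forward direction is the same reduction modulo the quartic, and the reverse direction exhibits an explicit rational inverse for $x$ in terms of $y$ (the paper uses the polynomial expression $x=-\tfrac{1}{208}(y^{6}+388y^{2})$ in place of your $52y^{2}/(y^{4}-28)$, so it only has to invert $208$ rather than also excluding $y^{4}=28$, but both reduce to the single exceptional prime $13$). The paper likewise disposes of $p=13$ by noting that neither polynomial has a root in $\mathbb{F}_{13}$, which you verify in slightly more detail.
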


\begin{proof}
Let $x,y \in \mathbb{F}_p$ satisfy $x^4+6x^2-3=0$ and $y^2=x^3-x$. Substitute $x^3-x$ for
$y^2$ in $y^8+360y^4-48$ and use the identity $x^4+6x^2-3=0$ to see that $y^8+360y^4-48=0$.
Conversely, let $y$ be a root of $y^8+360y^4-48$ in $\mathbb{F}_p$ and 
let $x =-\frac{1}{208}(y^6+388y^2)$. Substituting this value for $x$ in $x^4+6x^2-3$ we see
that $x^4+6x^2-3=0$, and substituting this value for $x$ in $y^2-x^3+x$ we see 
that $y^2=x^3-x$.

Note that although the prime 13 divides 208, this does not affect the proof of Lemma 1, since
neither $x^4+6x^2-3$ nor $y^8+360y^4-48$ have roots in $\mathbb{F}_{13}$.
\end{proof}

So we are interested in for which $p$ does $y^8+360y^4-48=0$ have a solution in $\mathbb{F}_p$.
The splitting field of $y^8+360y^4-48$ over $\mathbb{Q}$ has degree 16, so adjoining one root 
of $y^8+360y^4-48$ to $\mathbb{Q}$ gives a field which is not even Galois 
let alone abelian. But if we adjoin a root of $y^8+360y^4-48$ to $\mathbb{Q}(i,\sqrt{3})$ then
we obtain the full splitting field. This splitting field has degree 4 over
$\mathbb{Q}(i,\sqrt{3})$, with Galois group isomorphic to $C_4$. 
This will be helpful in our analysis.

Since $p=1 \mod{12}$, 3 is a quadratic residue of $p$. Also $p$ can be
written as $p=a^{2}-12b^{2}$ with $a,b>0$. We can now establish the
following:

\begin{theorem}
$z^{4}+360z^{2}-48=0$ has a solution in $\mathbb{F}_{p}$ if and only if $a=1 \mod{3}$.
\end{theorem}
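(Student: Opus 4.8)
The plan is to reduce the quartic to a single quadratic--residue condition in $\mathbb{F}_p$ and then evaluate that condition using the Jacobi symbol together with the representation $p=a^2-12b^2$. First, since $3$ is a quadratic residue mod $p$, I would set $u=z^2$ and solve $u^2+360u-48=0$ inside $\mathbb{F}_p$: the discriminant is $360^2+192=129792=208^2\cdot 3$, so $u=-180\pm 104\sqrt{3}$ with $\sqrt{3}\in\mathbb{F}_p$. Thus $z^4+360z^2-48=0$ is solvable iff one of $-180\pm104\sqrt{3}$ is a square in $\mathbb{F}_p$. Their product is $180^2-3\cdot104^2=-48=-16\cdot 3$, which is a square mod $p$ because $-1$ and $3$ are residues for $p\equiv 1\bmod{12}$; hence the two values have the same quadratic character, and it suffices to decide whether $\beta:=-180+104\sqrt{3}$ is a square.

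Second, I would strip $\beta$ down to the simplest representative of its square class in $\mathbb{Z}[\sqrt 3]$. A direct check gives $\beta=(3+2\sqrt3)(14-8\sqrt3)^2$, and since $N(14-8\sqrt3)=4$ is prime to the odd prime $p$, the factor $14-8\sqrt3$ is invertible modulo the prime $\mathfrak p$ above $p$; therefore $\beta$ is a square mod $\mathfrak p$ if and only if $\gamma:=3+2\sqrt3$ is. Writing $\mathbb{F}_p=\mathbb{Z}[\sqrt3]/\mathfrak p$ with $\mathfrak p=(a+2b\sqrt3)$, the relation $a^2\equiv 12b^2\bmod p$ shows $\sqrt3\equiv -a/(2b)$, so $\gamma\equiv(3b-a)/b$ and $\bigl(\tfrac{\gamma}{p}\bigr)=\bigl(\tfrac{(3b-a)b}{p}\bigr)$.

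Third, I would evaluate this Legendre symbol. Since $p=a^2-12b^2>0$ forces $a>3b$, and multiplying the argument by $4$ gives $4(3b-a)b\equiv a(a-4b)\bmod p$, I obtain $\bigl(\tfrac{\gamma}{p}\bigr)=\bigl(\tfrac{a}{p}\bigr)\bigl(\tfrac{a-4b}{p}\bigr)$. The second factor is trivial: because $-1$ is a residue I may replace $a-4b$ by $|a-4b|$ (note $a-4b$ is odd, as $a$ is odd), apply Jacobi reciprocity using $p\equiv 1\bmod 4$, and then use $p\equiv 4b^2\bmod(a-4b)$ with $\gcd(a-4b,b)=\gcd(a,b)=1$ to get $\bigl(\tfrac{a-4b}{p}\bigr)=1$. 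For the first factor, reciprocity and $p\equiv -12b^2\bmod a$ give $\bigl(\tfrac{a}{p}\bigr)=\bigl(\tfrac{p}{a}\bigr)=\bigl(\tfrac{-12b^2}{a}\bigr)=\bigl(\tfrac{-3}{a}\bigr)=\bigl(\tfrac{a}{3}\bigr)$, which equals $+1$ exactly when $a\equiv 1\bmod 3$. Chaining these equivalences proves the theorem.

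The main obstacle is the sign and coprimality bookkeeping that makes the Jacobi reciprocity steps legitimate: one must use $a>3b$ to control the sign of $3b-a$, exploit $\bigl(\tfrac{-1}{p}\bigr)=1$ to render the negative arguments positive, verify the oddness of $a$ and of $a-4b$ and the coprimalities $\gcd(a,b)=1$, and exclude the finitely many degenerate cases where some argument is divisible by $p$. Once these are in place, the residue-symbol identities and the final reduction $\bigl(\tfrac{-3}{a}\bigr)=\bigl(\tfrac{a}{3}\bigr)$ are routine. The only genuinely clever ingredient is the factorization $\beta=(3+2\sqrt3)(14-8\sqrt3)^2$, which collapses $\beta$ to the arithmetically transparent $\gamma=3+2\sqrt3$.
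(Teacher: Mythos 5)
Your proof is correct, and after the (shared) first step it takes a genuinely different route from the paper's. Both arguments begin the same way: factor the quartic over $\mathbb{F}_p$ using $\sqrt3\in\mathbb{F}_p$ as $(z^2-r)(z^2-r')$ with $r=-180+104\sqrt3=4\sqrt3(2-\sqrt3)^3$, note that $rr'=-48$ is a square because $p\equiv1\bmod{12}$, and so reduce to a single quadratic character; your representative $\gamma=3+2\sqrt3=\sqrt3(2+\sqrt3)$ and the paper's $\sqrt3(2-\sqrt3)$ differ by the unit square $(2+\sqrt3)^2$, so they lie in the same square class, and your factorization $\beta=(3+2\sqrt3)(14-8\sqrt3)^2$ checks out. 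The divergence is in how that character is evaluated. The paper invokes Eisenstein's quadratic reciprocity law in $\mathbb{Z}[\sqrt3]$ (via Lemmermeyer), proving $\left[\frac{\alpha}{\beta}\right]=\left[\frac{\beta}{\alpha}\right]$ by a case split on the parity of $b$ with the auxiliary primary elements $5+2\sqrt3$ and $11+2\sqrt3$, and reads off the answer from $\mathbb{Z}[\sqrt3]/(\sqrt3(2-\sqrt3))\cong\mathbb{F}_3$. You instead descend to $\mathbb{Q}$: writing $\sqrt3\equiv-a/2b$ modulo $\mathfrak{p}=(a+2b\sqrt3)$ turns the character of $\gamma$ into the rational symbol $\left(\frac{a}{p}\right)\left(\frac{a-4b}{p}\right)$, which classical Jacobi reciprocity evaluates via $p\equiv 4b^2\bmod(a-4b)$ and $p\equiv-12b^2\bmod a$. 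The bookkeeping you flag does all go through ($a$ odd, $\gcd(a,b)=1$, $p\nmid a$, $p\nmid b$, $p\nmid a-4b$, $3\nmid a$ all follow from the primality of $p$, and $a-4b=0$ is impossible), so the argument is complete in outline. The paper's route buys concision once the reciprocity law in $\mathbb{Q}(\sqrt3)$ is imported as a black box; yours buys self-containedness, using nothing beyond ordinary quadratic reciprocity, at the cost of the symbol manipulation and of having to produce the factorization $(3+2\sqrt3)(14-8\sqrt3)^2$ by hand --- a reduction the paper gets for free by writing $r$ as $4\sqrt3(2-\sqrt3)^3$ with $2-\sqrt3$ a unit.
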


\begin{proof}
We use quadratic reciprocity in the number field $\mathbb{Q}(\sqrt{3})$. We
have $p=\pi \cdot \pi ^{\prime }$ in $\mathbb{Q}(\sqrt{3})$ with $\pi =a+2b%
\sqrt{3}$ (where $\pi ^{\prime }$ denotes the conjugate of $\pi $).%
\[
z^{4}+360z^{2}-48=(z^{2}-r)(z^{2}-s)
\]%
where $r=4\sqrt{3}(2-\sqrt{3})^{3}$ and $s=r^{\prime }$. So the question is
whether $r$ or $s$ can be a square mod $\pi $. Since $p=1 \mod{12}$, $-48$
is a square mod $p$, and hence $rs$ is a square mod $p$. So $r$ is a square
mod $\pi $ if and only if $s$ is a square mod $\pi $.

The condition for $r$ to be a square mod $\pi $ is given by the Law of
Quadratic Reciprocity for quadratic fields (see \cite{Lemmermeyer}). If $%
\alpha $ and $\beta $ are coprime elements of $\mathbb{Z}[\sqrt{3}]$ with odd 
norm, and if $\beta $ is irreducible, then the quadratic Legendre symbol 
$\left[ \frac{\alpha }{\beta }\right] $ 
is defined to be $+1$ or $-1$ depending on whether or not $\alpha $ 
is a square mod $\beta $. Eisenstein's quadratic reciprocity law
states that if $\alpha $, $\beta $, $\gamma $, $\delta $ are irreducible
elements with odd norm and if they satisfy $(\alpha ,\beta )=(\gamma ,\delta
)=(1)$ and $\alpha \equiv \gamma ,\beta \equiv \delta \mod{4\infty} $,
then 
\[
\left[ \frac{\alpha }{\beta }\right] \left[ \frac{\beta }{\alpha }\right] =%
\left[ \frac{\gamma }{\delta }\right] \left[ \frac{\delta }{\gamma }\right]. 
\]%
The notation $\alpha \equiv \gamma \mod{4\infty} $ means that $\alpha
=\gamma \mod{4}$ and that $\alpha $ and $\gamma $ have the same
signature, i.e. $($sign$\alpha ,$sign$\alpha ^{\prime })=($sign$\gamma ,$sign%
$\gamma ^{\prime })$. We want to know when $4\sqrt{3}(2-\sqrt{3})^{3}$, or
equivalently $\sqrt{3}(2-\sqrt{3})$, is a square mod $\pi $. So we take $%
\alpha =\pi $ and $\beta =\sqrt{3}(2-\sqrt{3})$. Note that $\alpha $ and $%
\beta $ are irreducible elements of $\mathbb{Z}[\sqrt{3}]$ with norms $p$
and $-3$. It follows that $\mathbb{Z}[\sqrt{3}]/(\beta )\cong \mathbb{F}_{3}$
and that $\left[ \frac{\alpha }{\beta }\right] =1$ if and only if $a=1 \mod{3}$. 
We establish Theorem 4 by showing that $\left[ \frac{\alpha }{\beta }%
\right] =\left[ \frac{\beta }{\alpha }\right] $.

If $b$ is even, then $\alpha =\xi ^{2} \mod{4}$, where $\xi =1$ or $\sqrt{%
3}$. So (by definition) $\alpha $ is primary with signature $(+1,+1)$ and $%
\left[ \frac{\alpha }{\beta }\right] =\left[ \frac{\beta }{\alpha }\right] $
by Corollary 12.9 of \cite{Lemmermeyer}.

So suppose that $b$ is odd. Then, depending on whether $a=1$ or $3 \mod{4}$,
we have $\alpha =5+2\sqrt{3} \mod{4}$ or $\alpha =11+2\sqrt{3} \mod{4}$%
. Accordingly, we take $\gamma =5+2\sqrt{3}$ or $\gamma =11+2\sqrt{3}$ and
take $\delta =\beta $. Note that $5+2\sqrt{3}$ and $11+2\sqrt{3}$ are
irreducible elements with norms 13 and 109, and that both have signature $%
(+1,+1)$. It is straightforward to check that in both cases $\left[ \frac{%
\gamma }{\delta }\right] =\left[ \frac{\delta }{\gamma }\right] =-1$, and so
Eisenstein's quadratic reciprocity law implies that $\left[ \frac{\alpha }{%
\beta }\right] =\left[ \frac{\beta }{\alpha }\right] $.
\end{proof}

We can now use the previous theorem to prove the following:

\begin{theorem}
There is no congruence class $p=c \mod{12d}$ with $c=1\mod{12}$ and $%
(c,d)=1$ for which $y^{8}+360y^{4}-48$ always has a root.
\end{theorem}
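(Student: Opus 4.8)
The plan is to proceed by contraposition: instead of analysing when $y^{8}+360y^{4}-48$ has a root, I would show that every class $p\equiv c\bmod 12d$ with $c\equiv 1\bmod 12$ and $(c,d)=1$ contains primes for which it has no root. The point of entry is that a root $y$ of $y^{8}+360y^{4}-48$ yields a root $z=y^{2}$ of $z^{4}+360z^{2}-48$; hence it is enough to produce, in each such class, primes for which the quartic $z^{4}+360z^{2}-48$ already fails to have a root. Each such class lies inside $p\equiv 1\bmod 12$, so Theorem 4 applies, and it tells us that this failure happens precisely when $a\equiv 2\bmod 3$, where $p=a^{2}-12b^{2}$ with $a,b>0$. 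So the entire statement reduces to showing that $a\bmod 3$ cannot be pinned down by a congruence on $p$.

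I would then translate the condition into the language of splitting of primes. Let $K$ be the splitting field of $z^{4}+360z^{2}-48$ over $\mathbb{Q}$. Writing $z^{4}+360z^{2}-48=(z^{2}-r)(z^{2}-s)$ with $r=4\sqrt 3(2-\sqrt 3)^{3}$, $s=r'$ and $rs=-48$, one has $K=\mathbb{Q}(i,\sqrt 3,\sqrt r)$. For $p\equiv 1\bmod 12$ both $i$ and $\sqrt 3$ already lie in $\mathbb{F}_p$, and since $rs=-48$ is a square mod $p$ the elements $r$ and $s$ are simultaneously squares or non-squares; thus $z^{4}+360z^{2}-48$ has a root mod $p$ if and only if it splits completely, i.e. if and only if $p$ splits completely in $K$. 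Together with Theorem 4 this yields, for $p\equiv 1\bmod 12$, the equivalence
\[
a\equiv 1\bmod 3 \iff p\ \text{splits completely in}\ K .
\]
It therefore suffices to show that every progression $p\equiv c\bmod 12d$ contains primes that do not split completely in $K$.

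The mechanism I would use is the standard consequence of the Chebotarev density theorem that if every prime in a progression $p\equiv c\bmod m$ splits completely in a number field $K$, then $K\subseteq\mathbb{Q}(\zeta_m)$ and in particular $K/\mathbb{Q}$ is abelian. Applied with $m=12d$, a class in which $z^{4}+360z^{2}-48$ (hence $y^{8}+360y^{4}-48$) always had a root would force $K\subseteq\mathbb{Q}(\zeta_{12d})$, so it is enough to verify that $K/\mathbb{Q}$ is not abelian. This I would read off from the Galois group of the biquadratic: the quartic is irreducible over $\mathbb{Q}$, its constant term $-48$ is not a square, and $(-48)(360^{2}+192)$ is negative and hence not a rational square, so the standard classification of Galois groups of $z^{4}+Pz^{2}+Q$ excludes the abelian cases $V_4$ and $C_4$ and gives $\mathrm{Gal}(K/\mathbb{Q})\cong D_4$, which is non-abelian.

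Granting this, Chebotarev finishes the proof: since $K$ is non-abelian it lies in no cyclotomic field, so for each modulus $12d$ some prime of the class $p\equiv c\bmod 12d$ has nontrivial Frobenius in $\mathrm{Gal}(K/\mathbb{Q}(i,\sqrt 3))$, i.e. does not split completely in $K$; such a prime satisfies $a\equiv 2\bmod 3$ and so $y^{8}+360y^{4}-48$ has no root modulo it. I expect the only genuinely delicate step to be the identification of the correct governing field and the verification that it is non-abelian over $\mathbb{Q}$ --- in particular that passing from $y^{8}+360y^{4}-48$ to the quartic $z^{4}+360z^{2}-48$ does not accidentally land in an abelian extension; everything else is the formal Chebotarev dictionary together with Theorem 4.
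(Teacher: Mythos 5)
Your proposal is correct, but it proves the theorem by a genuinely different mechanism than the paper. The paper's own proof leans entirely on its Theorem 4 (the criterion $a\equiv 1\bmod 3$ for $p=a^{2}-12b^{2}$, obtained from Eisenstein's quadratic reciprocity in $\mathbb{Z}[\sqrt{3}]$) and then, given a class $c\bmod 12d$, explicitly manufactures a prime in that class with $a\equiv 2\bmod 3$: it first takes any prime $p=a^{2}-12b^{2}$ in the class by ordinary Dirichlet, and if $a\equiv 1\bmod 3$ it applies Rademacher's $\mathbb{Q}(\sqrt 3)$-analogue of Dirichlet's theorem to the progression $-a+2b\sqrt 3+12d(m+n\sqrt 3)$ to replace $a$ by a totally positive generator congruent to $-a\equiv 2\bmod 3$. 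You instead bypass the sign-flipping construction (and in fact could bypass Theorem 4 altogether, since your equivalence ``quartic has a root mod $p$ $\iff$ $p$ splits completely in $K$'' is established directly from $rs=-48$ being a square for $p\equiv 1\bmod{12}$): you identify the governing field $K=\mathbb{Q}(i,\sqrt 3,\sqrt r)$ of degree $8$, verify via the standard $V_4/C_4/D_4$ trichotomy for irreducible biquadratics that $\mathrm{Gal}(K/\mathbb{Q})\cong D_4$ is non-abelian, and invoke the Chebotarev/Kronecker--Weber consequence that a congruence condition $p\equiv c\bmod m$ with $(c,m)=1$ can force complete splitting only in subfields of $\mathbb{Q}(\zeta_m)$. (Note $(c,12d)=1$ does follow from $c\equiv 1\bmod{12}$ and $(c,d)=1$, so the class is nonempty and the argument applies.) Your route is shorter and more conceptual, trading the explicit reciprocity computation and the somewhat specialised Rademacher reference for the general Chebotarev machinery; the paper's route is constructive and keeps Theorem 4, which it wants anyway as an explicit criterion. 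Both arguments are sound; the one small assertion you leave unverified, irreducibility of $z^4+360z^2-48$ over $\mathbb{Q}$, is routine to check and true.
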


\begin{proof}
This follows provided we can show that there are primes $p=a^{2}-12b^{2}=c%
\mod{12d}$ with $a>0$ and $a=2\mod{3}$. By Dirichlet's Theorem, the
arithmetic progression $c+12nd$ $(n=1,2,\ldots )$ contains infinitely many
primes. Let $p$ be one of these primes, and write $p=a^{2}-12b^{2}$ with $a>0
$. If $a=2\mod{3}$ we are done. If not, 
consider the \textquotedblleft arithmetic progression\textquotedblright\ 
$-a+2b\sqrt{3}+12d(m+n\sqrt{3})$ with $m,n\in \mathbb{Z}$. From the 
$\mathbb{Q}(\sqrt{3})$ version of Dirichlet's theorem (see Rademacher 
\cite{rademacher35}), there is an irreducible element 
\[
\pi =-a+2b\sqrt{3}+12d(m+n\sqrt{3})
\]
for some $m,n\in \mathbb{Z}$, with $\pi >0$ and $\pi ^{\prime }>0$. Then%
\[
\pi \pi ^{\prime }=\left( -a+12dm+(2b+12dn)\sqrt{3}\right) \left(
-a+12dm-(2b+12dn)\sqrt{3}\right) 
\]%
is a rational prime 
\[
p=(-a+12dm)^{2}-12(b+6dn)^{2}=c\mod{12d},
\]%
with $-a+12dm>0$ and $(-a+12dm)=2\mod{3}$.
\end{proof}

The final piece of the jigsaw is the following:

\begin{theorem}
There are infinitely many solutions of $y^{8}+360y^{4}-48=0$ for $p=1\mod{12}$.
\end{theorem}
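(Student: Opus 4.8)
The plan is to deduce the theorem from the Chebotarev density theorem applied to the splitting field $L$ of $f(y)=y^{8}+360y^{4}-48$ over $\mathbb{Q}$, using the structural data already recorded above: $L$ has degree $16$ over $\mathbb{Q}$ and contains $\mathbb{Q}(i,\sqrt{3})=\mathbb{Q}(\zeta_{12})$, over which the remaining extension is cyclic of order $4$. By Lemma~1 the condition $V_p>0$ is equivalent to $f$ having a root in $\mathbb{F}_p$, so it suffices to exhibit infinitely many primes $p\equiv 1\pmod{12}$ for which $f$ has a root modulo $p$; this is exactly the first assertion of Theorem~\ref{Main Theorem}.

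The crucial observation is that the two requirements ``$p\equiv 1\pmod{12}$'' and ``$f$ has a root mod $p$'' are simultaneously secured by a single splitting condition. Indeed, $p\equiv 1\pmod{12}$ is precisely the statement that $p$ splits completely in $\mathbb{Q}(\zeta_{12})=\mathbb{Q}(i,\sqrt{3})$. Since $\mathbb{Q}(i,\sqrt{3})\subseteq L$, any prime splitting completely in $L$ splits completely in $\mathbb{Q}(i,\sqrt{3})$ and hence satisfies $p\equiv 1\pmod{12}$; and because $L$ is the splitting field of $f$, for all but the finitely many primes dividing the discriminant of $f$, splitting completely in $L$ is equivalent to $f$ factoring into distinct linear factors modulo $p$. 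In particular such $p$ give $V_p>0$.

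I would then invoke Chebotarev: the set of primes splitting completely in $L$ has density $1/[L:\mathbb{Q}]=1/16>0$ and is therefore infinite. Combined with the previous paragraph this produces infinitely many primes $p\equiv 1\pmod{12}$ with $V_p>0$, proving the theorem. One can in fact sharpen the statement to an ``if and only if'': for $p\equiv 1\pmod{12}$ the Frobenius at $p$ lies in $C_{4}=\mathrm{Gal}(L/\mathbb{Q}(i,\sqrt{3}))$, and since the generator of this $C_{4}$ acts on the eight roots of $f$ without fixed points (the roots forming two free orbits of size four), the Frobenius fixes a root exactly when it is trivial; thus $V_p>0$ if and only if $p$ splits completely in $L$, a set of density precisely $1/16$.

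There is no analytic obstacle once the splitting-field data are in hand. The only point deserving care — and the step I would verify most carefully — is the fixed-point-freeness of the $C_{4}$-action on the roots, which is what upgrades the argument to the exact equivalence $V_p>0\iff p\text{ splits completely in }L$. For the present theorem, which asserts only infinitude, this refinement is not required: one needs merely the containment $\mathbb{Q}(\zeta_{12})\subseteq L$ together with the existence of infinitely many primes splitting completely in $L$, so even this subtlety can be avoided.
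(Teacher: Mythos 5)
Your argument is correct and is essentially the paper's own proof: both invoke the density theorem to get that the primes splitting completely in the degree-$16$ splitting field $L$ have density $1/16$, hence are infinite, and both observe that such primes automatically satisfy $p\equiv 1\pmod{12}$ because $L$ contains $\mathbb{Q}(i,\sqrt{3})=\mathbb{Q}(\zeta_{12})$. The extra refinement you sketch (the fixed-point-free $C_4$-action giving the equivalence $V_p>0\iff p$ splits completely in $L$) goes beyond what the paper records here, but as you note it is not needed for the statement.
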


\begin{proof}
The splitting field of this polynomial has degree 16 over $\mathbb{Q}$, and
so the set of primes $p$ for which the polynomial splits over $\mathbb{F}_{p}
$ has Dirichlet density $\frac{1}{16}$ (see p.266 of Ireland and Rosen \cite%
{ir}). In particular, there are infinitely many such primes and they must
all be equal to $1\mod{12}$.
\end{proof}

\section{Counting the descendants of $G_{p}$}

We use the Lazard correspondence \cite{bourbaki} to count the immediate
descendants of $G_{p}$ of exponent $p$. This method was used in the
classification of groups of order $p^{6}$ \cite{newobvl} and $p^{7}$ \cite%
{obrienvl2}, and is explained in \cite{newobvl}. The Lazard correspondence
provides an isomorphism between the category of nilpotent Lie rings of order 
$p^{n}$ and nilpotency class at most $p-1$ and the category of $p$-groups of
order $p^{n}$ and class at most $p-1$. In particular, it gives an
isomorphism between the category of nilpotent Lie algebras of dimension $n$ 
over the field $\mathbb{F}_{p}$ and class at most $p-1$ and the category of
groups of exponent $p$ of order $p^{n}$ and class at most $p-1$. The Lie
algebra $L_{p}$ over $\mathbb{F}_{p}$ corresponding to the group $G_{p}$ has
a presentation on generators $x_{1},x_{2},\ldots ,x_{6},y_{1},y_{2},y\,_{3}$
with relations%
\[
\lbrack x_{1},x_{4}]=y_{3},\;[x_{1},x_{5}]=y_{1},\;[x_{1},x_{6}]=y_{2}, 
\]%
\[
\lbrack
x_{2},x_{4}]=y_{1},\;[x_{2},x_{5}]=y_{3},\;[x_{3},x_{4}]=y_{2},%
\;[x_{3},x_{6}]=y_{1}, 
\]%
and with all other Lie commutators trivial. Note that in this particular
case the presentation for the Lie algebra corresponding to $G_{p}$ is
identical to the presentation for $G_{p}$, though of course the commutators
have to be read as Lie commutators rather than as group commutators. This Lie
algebra is nilpotent of class 2 and of dimension 9, with $[L_{p},L_{p}]$
having dimension 3 and vector space basis $[x_{1},x_{4}]$, $[x_{1},x_{5}]$, $%
[x_{1},x_{6}]$. (Note that these basis elements for $[L_{p},L_{p}]$ are
equal to the defining generators $y_{3},y_{1},y_{2}$, but to avoid
notational conflict we will not use these three defining generators in the
following discussion.) For $p>3$ the immediate descendants of $L_{p}$
correspond under the Lazard correspondence to the immediate descendants of $%
G_{p}$ of exponent $p$.

It turns out that $L_{p}$ has immediate descendants of dimension 10 and 11,
and Theorem 1 is obtained by counting the immediate descendants of $L_{p}$
of dimension 10. A Lie algebra $A$ over $\mathbb{F}_{p}$ is (by definition)
an immediate descendant of $L_{p}$ if $A$ is nilpotent of class 3 and if $%
A/[A,A,A]\cong L_{p}$. We compute the immediate descendants as follows.
First we find the covering algebra for $L_{p}$. This is the largest Lie
algebra $M$ which is nilpotent of class 3 and contains an ideal $I$
satisfying the following properties:

\begin{enumerate}
\item $M/I\cong L_{p},$

\item $I\leq \lbrack M,M],$

\item $I$ is contained in the centre of $M.$
\end{enumerate}

The immediate descendants of $L_{p}$ are Lie algebras $M/J$, where $J$ is an
ideal of $M$ with $J<I$ and $J+[M,M,M]=I$. The trickiest part of classifying
the immediate descendants of $L_{p}$ is determining when two immediate
descendants $M/J$ and $M/K$ are isomorphic, and to solve this problem we
need to know the automorphism group of $L_{p}$.

\section{The automorphism group of $L_{p}$}

Let $V$ be the vector subspace of $L_{p}$ spanned by $%
x_{1},x_{2},x_{3},x_{4},x_{5},x_{6}$. It is sufficient to compute the
subgroup $G$ of the automorphism group of $L_{p}$ which maps $V$ onto $V$.
We claim that if $\left[ 
\begin{array}{cc}
\alpha & \beta \\ 
\gamma & \delta%
\end{array}%
\right] \in \,$GL$(2,p)$ then there is an automorphism in $G$ defined as
follows:%
\begin{eqnarray*}
x_{1} &\rightarrow &\alpha x_{1}+\beta x_{4}, \\
x_{2} &\rightarrow &\alpha x_{2}+\beta x_{5}, \\
x_{3} &\rightarrow &\alpha x_{3}+\beta x_{6}, \\
x_{4} &\rightarrow &\gamma x_{1}+\delta x_{4}, \\
x_{5} &\rightarrow &\gamma x_{2}+\delta x_{5}, \\
x_{6} &\rightarrow &\gamma x_{3}+\delta x_{6}.
\end{eqnarray*}%
Let $y_{i}$ be the image of $x_{i}$ under this map, for $i=1,2,3,4,5,6$. We
show that $y_{1},y_{2},\ldots ,y_{6}$ satisfy the defining relations of $%
L_{p}$. An important and useful property of $L_{p}$ is the following: if $%
1\leq i,j\leq 3$ then%
\[
\lbrack x_{3+i},x_{j}]=[x_{3+j},x_{i}]. 
\]%
We will regularly make use of this property without comment.

First consider $[y_{2},y_{1}]$.%
\begin{eqnarray*}
&&\lbrack y_{2},y_{1}] \\
&=&[\alpha x_{2}+\beta x_{5},\alpha x_{1}+\beta x_{4}] \\
&=&\alpha \beta \lbrack x_{2},x_{4}]+\alpha \beta \lbrack x_{5},x_{1}] \\
&=&0.
\end{eqnarray*}

The proofs that $[y_{3},y_{1}]=[y_{3},y_{2}]=0$ and that $[y_{i},y_{j}]=0$
for $i,j\in \{4,5,6\}$, are similar.

Now let $1\leq i,j\leq 3$. Then%
\begin{eqnarray*}
&&[y_{3+i},y_{j}] \\
&=&[\gamma x_{i}+\delta x_{3+i},\alpha x_{j}+\beta x_{3+j}] \\
&=&(\alpha \delta -\beta \gamma )[x_{3+i},x_{j}].
\end{eqnarray*}%
It follows immediately from this that%
\[
\lbrack y_{4},y_{1}]=[y_{5},y_{2}], 
\]%
\[
\lbrack y_{4},y_{3}]=[y_{6},y_{1}], 
\]%
\[
\lbrack y_{5},y_{1}]=[y_{4},y_{2}]=[y_{6},y_{3}], 
\]%
\[
\lbrack y_{5},y_{3}]=[y_{6},y_{2}]=0. 
\]%
So this map does define an automorphism of $L_{p}$.

The subspace of $V$ spanned by $x_{1},x_{2},x_{3}$ generates an abelian
subalgebra of $L_{p}$ of dimension 3, and it is fairly easy to check that
every three dimensional subspace of $V$ which generates an abelian
subalgebra of $L_{p}$ is the image of Sp$\langle x_{1},x_{2},x_{3}\rangle $
under one of the automorphisms described above. (See Section 6 below.) So,
modulo these automorphisms, it is sufficient to consider the subgroup $H\leq
G$ consisting of automorphisms which map Sp$\langle x_{1},x_{2},x_{3}\rangle 
$ to itself, and also map Sp$\langle x_{4},x_{5},x_{6}\rangle $ to itself.
So from now on we will look for automorphisms in $H$.

The action of GL$(2,p)$ described above gives automorphisms in $H$ of the
form%
\begin{eqnarray*}
x_{1} &\rightarrow &\alpha x_{1}, \\
x_{2} &\rightarrow &\alpha x_{2}, \\
x_{3} &\rightarrow &\alpha x_{3}, \\
x_{4} &\rightarrow &\delta x_{4}, \\
x_{5} &\rightarrow &\delta x_{5}, \\
x_{6} &\rightarrow &\delta x_{6}.
\end{eqnarray*}%
In addition there are automorphisms in $H$ defined by%
\begin{eqnarray*}
x_{1} &\rightarrow &-x_{1}, \\
x_{2} &\rightarrow &-x_{2}, \\
x_{3} &\rightarrow &x_{3}, \\
x_{4} &\rightarrow &-x_{4}, \\
x_{5} &\rightarrow &-x_{5}, \\
x_{6} &\rightarrow &x_{6},
\end{eqnarray*}%
and%
\begin{eqnarray*}
x_{1} &\rightarrow &ux_{1}, \\
x_{2} &\rightarrow &-ux_{2}, \\
x_{3} &\rightarrow &x_{3}, \\
x_{4} &\rightarrow &ux_{4}, \\
x_{5} &\rightarrow &-ux_{5}, \\
x_{6} &\rightarrow &x_{6},
\end{eqnarray*}%
where $u^{2}=-1$. (Of course the last of these can only occur when $p=1\mod{4}$.)

In addition, for some primes $p$ there are automorphisms of the form%
\[
\left[ 
\begin{array}{c}
x_{1} \\ 
x_{2} \\ 
x_{3}%
\end{array}%
\right] \longmapsto \left[ 
\begin{array}{ccc}
a & ab & ac \\ 
df & -f & -def \\ 
1 & d & e%
\end{array}%
\right] \left[ 
\begin{array}{c}
x_{1} \\ 
x_{2} \\ 
x_{3}%
\end{array}%
\right] 
\]%
and%
\[
\left[ 
\begin{array}{c}
x_{4} \\ 
x_{5} \\ 
x_{6}%
\end{array}%
\right] \longmapsto \left[ 
\begin{array}{ccc}
a & ab & ac \\ 
df & -f & -def \\ 
1 & d & e%
\end{array}%
\right] \left[ 
\begin{array}{c}
x_{4} \\ 
x_{5} \\ 
x_{6}%
\end{array}%
\right] . 
\]%
These automorphisms occur when we can solve the two equations%
\begin{eqnarray*}
d^{4}+6d^{2}-3 &=&0, \\
1-d^{2}+de^{2} &=&0
\end{eqnarray*}%
over $\mathbb{F}_{p}$. Then we let $a$ be a solution of $a^{2}=\pm \frac{%
(d^{2}-1)^{2}}{4d}$, and we set $b=\frac{3d+d^{3}}{1-d^{2}}$, $c=\allowbreak 
\frac{e\left( d^{2}+3\right) }{d^{2}-1}$, $f=\frac{d^{2}-1}{2da}$.

The equation $x^{2}+6x-3$ has roots $-3\pm \sqrt{12}$, and so there is no
solution to the equations unless $3$ is a quadratic residue modulo $p$.
Using quadratic reciprocity we see that 3 is a quadratic residue modulo $p$
if $p=\pm 1\mod{12}$.

The case $p=-1\mod{12}$ is straightforward. We need to find solutions to 
\[
d^{2}=-3\pm \sqrt{12}. 
\]
Since%
\[
(-3+\sqrt{12})(-3-\sqrt{12})=-3, 
\]%
which is \emph{not} a quadratic residue modulo $p$, we see that one of these
two equations has a solution and the other does not. So we have two
solutions $\pm d$ to the quartic equation. We now need to solve the equation%
\[
e^{2}=\frac{d^{2}-1}{\pm d}, 
\]%
and again, one of these equations has two solutions and the other has none.
So the two equations have exactly two solutions $d,\pm e$. For each of these
two solutions we obtain two possibilities for $a\,$, and then the given
values for $d,e,a$ determine $b,c,f$. So there are four automorphisms of
this form.

The case $p=1\mod{12}$ is much more complicated. In this case 
\[
(-3+\sqrt{12})(-3-\sqrt{12}) 
\]%
is a quadratic residue modulo $p$, and so either both the equations $%
d^{2}=-3\pm \sqrt{12}$ have solutions, or neither equation has a solution.
So there are either 0 or 4 solutions to $d^{4}+6d^{2}-3=0$. Suppose that we
have four solutions $\pm d_{1},\pm d_{2}$. Then we need to solve the
equations%
\[
e^{2}=\frac{d_{1}^{2}-1}{\pm d_{1}},\;e^{2}=\frac{d_{2}^{2}-1}{\pm d_{2}}. 
\]%
Since $-1$ is a quadratic residue modulo $p$ it is clear that $e^{2}=\frac{%
d_{1}^{2}-1}{\pm d_{1}}$ either has 4 solutions or none, and similarly $%
e^{2}=\frac{d_{2}^{2}-1}{\pm d_{2}}$ either has 4 solutions or none. Now%
\[
\frac{d_{1}^{2}-1}{d_{1}}\cdot \frac{d_{2}^{2}-1}{d_{2}}=\frac{(-4+\sqrt{12}%
)(-4-\sqrt{12})}{\sqrt{-3}}=\frac{4}{\sqrt{-3}}, 
\]%
and it turns out that $\sqrt{-3}$ is a square. This is because if $u^{2}=-1$
then 
\[
(\frac{1}{4}(1+u)(d^{3}+5d))^{4}=-3. 
\]%
So the equation $d^{4}+6d^{2}-3=0$ either has no solutions or four
solutions, and in the case when there are solutions then we either obtain no
solutions to the equations $1-d^{2}+de^{2}=0$, or we obtain a total of 8
solutions. The experimental evidence from looking at primes less than a
million indicates that the equation $d^{4}+6d^{2}-3=0$ has solutions for
approximately half the primes $p=1\mod{12}$, and that approximately half
of the primes $p=1\mod{12}$ which have solutions to $d^{4}+6d^{2}-3=0$
also have solutions to the equations $1-d^{2}+de^{2}=0$. Note that $d,e$ is
a solution to these two equations in $\mathbb{F}_{p}$ if and only if
$(x,y)=(d,de)$ is a solution to the two equations $x^4+6x^2-3=0$ 
and $y^2=x^3-x$. So, from Theorem 2 we
see that there are infinitely many primes $p=1\mod{12}$ for which the
two equations have solutions, but that there is no sub-congruence of $p=1%
\mod{12}$ such that there are solutions to the two equations for all $p$
in that sub-congruence class.

For each solution $d,e$ there are 4 solutions for $a$ with $a^{2}=\pm \frac{%
(d^{2}-1)^{2}}{4d}$. To see this note that to find 4 solutions for $a$ it is
sufficient that $-d$ be a square. Since $-d=\frac{1-d^{2}}{e^{2}}$ we need $%
1-d^{2}$ to be a square, and this is indeed the case since%
\[
4(1-d^{2})=4(1-d^{2})+(d^{4}+6d^{2}-3)=d^{4}+2d^{2}+1=(d^{2}+1)^{2}. 
\]%
So the four solutions for $a$ are $u\frac{(d^{2}+1)e}{4}$ where $u^{4}=1$.
The values of $b,c,f$ are determined by $d,e,a$. So there are 0 or 32
automorphisms of this form.

We give proofs that these are the only automorphisms in $H$ in Section 7.

\section{Abelian subalgebras of dimension 3}

As above we let $V$ be the vector subspace of $L_{p}$ spanned by $%
x_{1},x_{2},x_{3},x_{4},x_{5},x_{6}$. In this section we justify our claim
made above that any 3 dimensional subspace of $V$ which generates an abelian
subalgebra of $L_{p}$ has the form Sp$\langle \alpha x_{1}+\beta
x_{4},\alpha x_{2}+\beta x_{5},\alpha x_{3}+\beta x_{6}\rangle $ for some $%
\alpha ,\beta $. So let $W$ be such a subspace of $V$. Let $U=\,$Sp$\langle
x_{1},x_{2},x_{3}\rangle $.

First assume that $U\cap W\neq \{0\}$, and let $u\in U\cap W\backslash \{0\}$%
. Then $W$ must be a subspace of the centralizer of $u$ in $V$, $C_{V}(u)$.
We consider the possibilities for $C_{V}(u)$. First note that 
\begin{eqnarray*}
C_{V}(x_{2}) &=&\text{Sp}\langle x_{1},x_{2},x_{3},x_{6}\rangle , \\
C_{V}(x_{3}) &=&\text{Sp}\langle x_{1},x_{2},x_{3},x_{5}\rangle ,
\end{eqnarray*}%
and that if $\lambda \neq 0$ then%
\[
C_{V}(x_{2}+\lambda x_{3})=U. 
\]%
Next consider $C_{V}(x_{1}+dx_{2}+ex_{3})$. We have%
\begin{eqnarray*}
\lbrack x_{4},x_{1}+dx_{2}+ex_{3}]
&=&[x_{4},x_{1}]+d[x_{5},x_{1}]+e[x_{6},x_{1}], \\
\lbrack x_{5},x_{1}+dx_{2}+ex_{3}] &=&d[x_{4},x_{1}]+[x_{5},x_{1}], \\
\lbrack x_{6},x_{1}+dx_{2}+ex_{3}] &=&e[x_{5},x_{1}]+[x_{6},x_{1}].
\end{eqnarray*}%
It follows that $C_{V}(x_{1}+ax_{2}+bx_{3})=U$ unless%
\[
\det \left[ 
\begin{array}{ccc}
1 & d & e \\ 
d & 1 & 0 \\ 
0 & e & 1%
\end{array}%
\right] =1-d^{2}+de^{2}=0, 
\]%
in which case $C_{V}(x_{1}+dx_{2}+ex_{3})=\,$Sp$\langle
x_{1},x_{2},x_{3},dx_{4}-x_{5}-dex_{6}\rangle $. Since $W\leq C_{V}(u)$ we
see that either $W=U$, or $W\,$\ is a subspace of one of Sp$\langle
x_{1},x_{2},x_{3},x_{6}\rangle $, Sp$\langle x_{1},x_{2},x_{3},x_{5}\rangle $%
, Sp$\langle x_{1},x_{2},x_{3},dx_{4}-x_{5}-dex_{6}\rangle $. It follows
that $W$ has non-trivial intersection with Sp$\langle x_{1},x_{3}\rangle $.
Now $C_{V}(x_{1}+\lambda x_{3})=U$ (for any $\lambda $), and so if $W\neq U$
we must have $x_{3}\in W$. Similarly, using the fact that $W$ has
non-trivial intersection with Sp$\langle x_{1},x_{2}\rangle $, we see that
if $W\neq U$ then one of $x_{1}+x_{2}$, $x_{1}-x_{2}$, $x_{2}$ lies in $W$.
But this implies that one of $x_{1}+x_{2}+x_{3}$, $x_{1}-x_{2}+x_{3}$, $%
x_{2}+x_{3}$ lies in $W$. These three elements all have centralizers equal
to $U$, and so $W=U$.

Now assume the $U\cap W=\{0\}$. Then $W=$Sp$\langle
u_{1}+x_{4},u_{2}+x_{5},u_{3}+x_{6}\rangle $ for some $u_{1},u_{2},u_{3}\in
U $. Since $W$ is abelian we have%
\begin{eqnarray*}
\lbrack x_{4},u_{2}] &=&[x_{5},u_{1}], \\
\lbrack x_{4},u_{3}] &=&[x_{6},u_{1}], \\
\lbrack x_{5},u_{3}] &=&[x_{6},u_{2}],
\end{eqnarray*}%
and it is straightforward to show that this implies that for some $\lambda $
we have $u_{1}=\lambda x_{1}$, $u_{2}=\lambda x_{2}$, $u_{3}=\lambda x_{3}$.

This establishes our claim.

\section{Automorphisms in $H$}

We consider automorphisms of $L_{p}$ which map Sp$\langle
x_{1},x_{2},x_{3}\rangle $ to itself, and also map Sp$\langle
x_{4},x_{5},x_{6}\rangle $ to itself. These automorphisms take the form%
\[
\left[ \;%
\begin{array}{c}
x_{1} \\ 
x_{2} \\ 
x_{3}%
\end{array}%
\right] \rightarrow A\left[ \;%
\begin{array}{c}
x_{1} \\ 
x_{2} \\ 
x_{3}%
\end{array}%
\right] ,\;\left[ \;%
\begin{array}{c}
x_{4} \\ 
x_{5} \\ 
x_{6}%
\end{array}%
\right] \rightarrow B\left[ \;%
\begin{array}{c}
x_{4} \\ 
x_{5} \\ 
x_{6}%
\end{array}%
\right] 
\]%
where $A$ and $B$ are non-singular $3\times 3$ matrices over $\mathbb{F}_{p}$%
.

First we show that for automorphisms of this form we must have $A=\lambda B$
for some scalar $\lambda $.

So let $\theta $ be an automorphism of this form. Recall that 
\[
C_{V}(x_{2})=\text{Sp}\langle x_{1},x_{2},x_{3},x_{6}\rangle,
\]
and so $\theta x_{2}$ must also
be an element with centralizer of dimension 4. As we saw in Section 3, the
elements in Sp$\langle x_{1},x_{2},x_{3}\rangle $ with centralizers of
dimension 4 are scalar multiples of $x_{2}$ and $x_{3}$, and scalar
multiples of elements of the form $x_{1}+dx_{2}+ex_{3}$ where $%
1-d^{2}+de^{2}=0$. So $\theta x_{2}$ must be a scalar multiple of one of $%
x_{2}$, $x_{3}$ or $x_{1}+dx_{2}+ex_{3}$. This implies that $[\theta
x_{2},L_{p}]$ is one of the following:%
\begin{eqnarray*}
\lbrack x_{2},L_{p}] &=&\text{Sp}\langle \lbrack
x_{4},x_{1}],[x_{5},x_{1}]\rangle , \\
\lbrack x_{3},L_{p}] &=&\text{Sp}\langle \lbrack
x_{5},x_{1}],[x_{6},x_{1}]\rangle , \\
\lbrack x_{1}+dx_{2}+ex_{3},L_{p}] &=&\text{Sp}\langle \lbrack
x_{4},x_{1}]+d[x_{5},x_{1}]+e[x_{6},x_{1}],e[x_{5},x_{1}]+[x_{6},x_{1}]%
\rangle .
\end{eqnarray*}%
Note that these 2 dimensional subspaces are all different. In particular,
different solutions to the equation $1-d^{2}+de^{2}$ give different
subspaces. Similarly $\theta x_{5}$ must be a scalar multiple of one of $%
x_{5}$, $x_{6}$ or $x_{4}+dx_{5}+ex_{6}$, and so $[\theta x_{5},L_{p}]$ is
one of the following:%
\begin{eqnarray*}
\lbrack x_{5},L_{p}] &=&\text{Sp}\langle \lbrack
x_{4},x_{1}],[x_{5},x_{1}]\rangle , \\
\lbrack x_{6},L_{p}] &=&\text{Sp}\langle \lbrack
x_{5},x_{1}],[x_{6},x_{1}]\rangle , \\
\lbrack x_{4}+dx_{5}+[ex_{6},L_{p}] &=&\text{Sp}\langle \lbrack
x_{4},x_{1}]+d[x_{5},x_{1}]+e[x_{6},x_{1}],e[x_{5},x_{1}]+[x_{6},x_{1}]%
\rangle .
\end{eqnarray*}%
Now $[x_{2},L_{p}]=[x_{5},L_{p}]$, and so $[\theta x_{2},L_{p}]=[\theta
x_{5},L_{p}]$. This implies that one of three possibilities must arise:

\begin{enumerate}
\item $\theta x_{2}$ is a scalar multiple of $x_{2}$ and $\theta x_{5}$ is a
scalar multiple of $x_{5}$,

\item $\theta x_{2}$ is a scalar multiple of $x_{3}$ and $\theta x_{5}$ is a
scalar multiple of $x_{6}$,

\item $\theta x_{2}$ is a scalar multiple of $x_{1}+dx_{2}+ex_{3}$ and $%
\theta x_{5}$ is a scalar multiple of $x_{4}+dx_{4}+ex_{6}$ (with the same $%
d,e$).
\end{enumerate}

In other words, the second row of the matrix $A$ is a scalar multiple of the
second row of $B$. Similarly, the third row of $A$ is a scalar multiple of
the third row of $B$.

Now let%
\[
A=\left[ 
\begin{array}{ccc}
a_{11} & a_{12} & a_{13} \\ 
a_{21} & a_{22} & a_{23} \\ 
a_{31} & a_{32} & a_{33}%
\end{array}%
\right] ,\;B=\left[ 
\begin{array}{ccc}
b_{11} & b_{12} & b_{13} \\ 
b_{21} & b_{22} & b_{23} \\ 
b_{31} & b_{32} & b_{33}%
\end{array}%
\right] . 
\]%
The second and third rows of $B$ are scalar multiples of the second and
third rows of $A$, and so we can express $\left[ b_{11},b_{12},b_{13}\right] 
$ in the form%
\[
\lambda \left[ a_{11},a_{12},a_{13}\right] +\mu \left[ b_{21},b_{22},b_{23}%
\right] +\nu \left[ b_{31},b_{32},b_{33}\right] 
\]%
for some $\lambda ,\mu ,\nu $. It is a property of the algebra $L_{p}$ that
for any scalars $a,b,c,d,e,f$,%
\[
\lbrack
ax_{4}+bx_{5}+cx_{6},dx_{1}+ex_{2}+fx_{3}]=[dx_{4}+ex_{5}+fx_{6},ax_{1}+bx_{2}+cx_{3}]. 
\]%
It follows that%
\begin{eqnarray*}
&&[\theta x_{4},\theta x_{2}] \\
&=&\lambda \lbrack a_{11}x_{4}+a_{12}x_{5}+a_{13}x_{6},\theta x_{2}]+\mu
\lbrack \theta x_{5},\theta x_{2}]+\nu \lbrack \theta x_{6},\theta x_{2}] \\
&=&\lambda \lbrack
a_{11}x_{4}+a_{12}x_{5}+a_{13}x_{6},a_{21}x_{1}+a_{22}x_{2}+a_{23}x_{3}]+\mu
\lbrack \theta x_{5},\theta x_{2}]\text{ since }[x_{6},x_{2}]=0 \\
&=&\lambda \lbrack
a_{21}x_{4}+a_{22}x_{5}+a_{23}x_{6},a_{11}x_{1}+a_{12}x_{2}+a_{13}x_{3}]+\mu
\lbrack \theta x_{5},\theta x_{2}].
\end{eqnarray*}%
Now $a_{21}x_{4}+a_{22}x_{5}+a_{23}x_{6}$ is a scalar multiple of $\theta
x_{5}$, and so 
\[
\lambda \lbrack
a_{21}x_{4}+a_{22}x_{5}+a_{23}x_{6},a_{11}x_{1}+a_{12}x_{2}+a_{13}x_{3}] 
\]%
is a non-trivial scalar multiple of $[\theta x_{5},\theta x_{1}]=[\theta
x_{4},\theta x_{2}]$. On the other hand, $[\theta x_{5},\theta x_{2}]$ and $%
[\theta x_{4},\theta x_{2}]$ are linearly independent, and so we must have $%
\mu =0$. Similarly considering $[\theta x_{4},\theta x_{3}]$ we see that $%
\nu =0$. So the rows of $B$ are all scalar multiples of the rows of $A$.

We may now assume that%
\begin{eqnarray*}
\lbrack b_{11},b_{12},b_{13}] &=&\lambda \lbrack a_{11},a_{12},a_{13}], \\
\lbrack b_{21},b_{22},b_{23}] &=&\mu \lbrack a_{21},a_{22},a_{23}], \\
\lbrack b_{31},b_{32},b_{33}] &=&\nu \lbrack a_{31},a_{32},a_{33}]
\end{eqnarray*}%
for some $\lambda ,\mu ,\nu $. But then the relation $%
[x_{5},x_{1}]=[x_{4},x_{2}]$ implies that $\lambda =\mu $, and the relation $%
[x_{6},x_{1}]=[x_{4},x_{3}]$ implies that $\lambda =\nu $. So $B=\lambda A$,
as claimed.

Composing $\theta $ with an automorphism of the form%
\begin{eqnarray*}
x_{1} &\rightarrow &\alpha x_{1}, \\
x_{2} &\rightarrow &\alpha x_{2}, \\
x_{3} &\rightarrow &\alpha x_{3}, \\
x_{4} &\rightarrow &\delta x_{4}, \\
x_{5} &\rightarrow &\delta x_{5}, \\
x_{6} &\rightarrow &\delta x_{6},
\end{eqnarray*}%
we may assume that $A=B$, and that $\theta x_{3}$ equals $x_{2}$ or $x_{3}$
or $x_{1}+dx_{2}+ex_{3}$ for some solution of $1-d^{2}+de^{2}=0$.

First, we show that the possibility $\theta x_{3}=x_{2}$ never arises.
Suppose, to the contrary, that $\theta x_{3}=x_{2}$. The relation $%
[x_{5},x_{3}]=0$ implies that $\theta x_{5}=\lambda x_{6}$ for some $\lambda 
$. The condition $A=B$ implies that $\theta x_{2}=\lambda x_{3}$, $\theta
x_{6}=x_{5}$. Let $\theta x_{1}=ax_{1}+bx_{2}+cx_{3}$. Then%
\[
\lbrack \theta x_{5},\theta x_{1}]=\lambda c[x_{5},x_{1}]+\lambda
a[x_{6},x_{1}] 
\]%
and%
\[
\lbrack \theta x_{6},\theta x_{3}]=[x_{5},x_{2}]=[x_{4},x_{1}]. 
\]%
However this conflicts with the relation $[x_{5},x_{1}]=[x_{6},x_{3}]$, and
so $\theta x_{3}=x_{2}$ cannot arise.

Next consider the possibility that $\theta x_{3}=x_{3}$. Then we must have $%
\theta x_{2}=\lambda x_{2}$ for some $\lambda $. This gives $\theta
x_{5}=\lambda x_{5}$, $\theta x_{6}=x_{6}$. Let $\theta
x_{1}=ax_{1}+bx_{2}+cx_{3}$. Then%
\[
\lbrack \theta x_{5},\theta x_{1}]=\lambda b[x_{4},x_{1}]+\lambda
a[x_{5},x_{1}] 
\]%
and%
\[
\lbrack \theta x_{6},\theta x_{3}]=[x_{6},x_{3}]=[x_{5},x_{1}]. 
\]%
So the relation $[x_{5},x_{1}]=[x_{6},x_{3}]$ implies that $a=\lambda ^{-1}$%
, $b=0$. This gives%
\[
\lbrack \theta x_{4},\theta x_{1}]=\lambda
^{-2}[x_{4},x_{1}]+c^{2}[x_{5},x_{1}]+2\lambda ^{-1}c[x_{6},x_{1}] 
\]%
and%
\[
\lbrack \theta x_{5},\theta x_{2}]=\lambda ^{2}[x_{5},x_{2}]=\lambda
^{2}[x_{4},x_{1}]. 
\]%
So the relation $[x_{4},x_{1}]=[x_{5},x_{2}]$ gives $\lambda ^{4}=1$ and $%
c=0 $. So we have%
\[
A=B=\left[ 
\begin{array}{ccc}
\lambda ^{-1} & 0 & 0 \\ 
0 & \lambda & 0 \\ 
0 & 0 & 1%
\end{array}%
\right] 
\]%
where $\lambda ^{4}=1$.

Finally consider the possibility that $\theta x_{3}=x_{1}+dx_{2}+ex_{3}$ for
some $d,e$ satisfying $1-d^{2}+de^{2}=0$. The relation $[x_{5},x_{3}]=0$
implies that $\theta x_{5}=dfx_{4}-fx_{5}-defx_{6}$ for some non-zero $f$.
The assumption that $A=B$ implies that $\theta x_{2}=dfx_{1}-fx_{2}-defx_{3}$%
, $\theta x_{6}=x_{4}+dx_{5}+ex_{6}$. Let $\theta x_{1}=ax_{1}+bx_{2}+cx_{3}$%
.

We first show that $a\neq 0$. Suppose to the contrary that $a=0$, so that $%
\theta x_{1}=bx_{2}+cx_{3}$ and $\theta x_{4}=bx_{5}+cx_{6}$. Then computing 
$[\theta x_{4},\theta x_{1}]$ and $[\theta x_{5},\theta x_{2}]$ we see that
the relation $[x_{4},x_{1}]=[x_{5},x_{2}]$ gives $d^{2}ef=0$. Since $f\neq 0$
and $d$ cannot equal 0, this implies that $e=0$, and hence that $d=\pm 1$.
But now computing $[\theta x_{5},\theta x_{1}]$ and $[\theta x_{6},\theta
x_{3}]$ we see that the relation $[x_{5},x_{1}]=[x_{6},x_{3}]$ gives $%
-fb=1+d^{2}=2$, $dfb=2d$, $dfc=0$. However the first two of these three
relations are incompatible, and so $a=0$ is impossible.

This means that we can take $\theta x_{1}=ax_{1}+abx_{2}+acx_{3}$, $\theta
x_{4}=ax_{4}+abx_{5}+acx_{6}$ for some $a,b,c$ with $a\neq 0$. Thus%
\[
A=B=\left[ 
\begin{array}{ccc}
a & ab & ac \\ 
df & -f & -def \\ 
1 & d & e%
\end{array}%
\right] . 
\]%
The relations $[x_{4},x_{1}]=[x_{5},x_{2}]$ and $[x_{5},x_{1}]=[x_{6},x_{3}]$
now give six equations which $a,b,c,d,e$ must satisfy:

\begin{eqnarray}
a^{2}(1+b^{2}) &=&f^{2}(1+d^{2}), \\
a^{2}(2b+c^{2}) &=&f^{2}(d^{2}e^{2}-2d),  \nonumber \\
a^{2}c &=&-d^{2}ef^{2},  \nonumber \\
af(d-b) &=&1+d^{2},  \nonumber \\
af(bd-cde-1) &=&2d+e^{2},  \nonumber \\
adf(c-e) &=&2e.  \nonumber
\end{eqnarray}%
Since $af\neq 0$, the last three equations above give%
\begin{eqnarray*}
(1+d^{2})(bd-cde-1)-(d-b)(2d+e^{2}) &=&0, \\
(1+d^{2})d(c-e)-(d-b)2e &=&0.
\end{eqnarray*}%
Multiplying the second of these two equations by $e$, and then adding to the
first, we obtain%
\[
(1+d^{2})(bd-1-de^{2})-(d-b)(2d+3e^{2})=0. 
\]%
Multiplying this equation by $d$, and then using the relation $%
1-d^{2}+de^{2}=0$ to eliminate $de^{2}$ we obtain%
\[
\left( b-d\right) \left( d^{4}+6d^{2}-3\right) =0. 
\]%
Now $b=d$ is impossible, because if $b=d$ then the equation $af(d-b)=1+d^{2}$
gives $d^{2}=-1$, which would imply that $A$ is singular. So we must have%
\[
d^{4}+6d^{2}-3=0. 
\]

The equation $(1+d^{2})d(c-e)-(d-b)2e=0\allowbreak $ gives $c=\frac{%
d^{3}e-2be+3de}{d(1+d^{2})}$. Since $a$ and $f$ are both non-zero, the first
and third equations from (1) give%
\[
(1+d^{2})c+(1+b^{2})d^{2}e=0. 
\]%
Substituting $\frac{d^{3}e-2be+3de}{d(1+d^{2})}$ for $c$ in this equation we
obtain

\[
e\left( b^{2}d^{3}-2b+2d^{3}+3d\right) =0. 
\]%
Now $e\neq 0$, for if $e=0$ then the equation $1-d^{2}+de^{2}=0$ implies
that $d=\pm 1$, which is incompatible with the equation $d^{4}+6d^{2}-3=0$.
So%
\begin{equation}
b^{2}d^{3}-2b+2d^{3}+3d=0.
\end{equation}

The second and third equations from (1) give%
\[
(d^{2}e^{2}-2d)c+(2b+c^{2})d^{2}e=0 
\]%
Substituting $\frac{d^{3}e-2be+3de}{d(1+d^{2})}$ for $c$, and then
substituting $\frac{d^{2}-1}{d}$ for $e^{2}$ we obtain%
\[
\left( -b+3d+bd^{2}+d^{3}\right) \left( 2b-3d+d^{5}\right) =0. 
\]%
This gives $b=\frac{3d-d^{5}}{2}$ or $b=\frac{d^{3}+3d}{1-d^{2}}$. However,
if we substitute $\frac{3d-d^{5}}{2}$ for $b$ in (2) we obtain

\[
d^{3}\left( d^{2}+1\right) ^{2}\left( -d^{6}+2d^{4}+3d^{2}-8\right) =0 
\]%
Now we know that $d\neq 0$, $d^{2}+1\neq 0$, $d^{4}+6d^{2}-3=0$. The
greatest common divisor of $d^{4}+6d^{2}-3$ and $-d^{6}+2d^{4}+3d^{2}-8$ is
1, and so this is impossible. So $b=\frac{d^{3}+3d}{1-d^{2}}$.

Substituting this value for $b$ into our expression for $c$ we obtain $c=%
\frac{e(d^{2}+3)}{d^{2}-1}$. Also, substituting this value of $b$ into the
fourth equation from (1), we obtain $f=\frac{d^{2}-1}{2da}$. Substituting
these values for $b$ and $f$ into the first equation from (1) we obtain%
\[
a^{4}=\frac{\left( d^{2}-1\right) ^{4}}{4d^{2}\left( d^{4}+6d^{2}+1\right) }=%
\frac{\left( d^{2}-1\right) ^{4}}{16d^{2}}. 
\]%
So, as we showed in Section 5, the solutions for $a$ are $a=u\frac{(d^{2}+1)e%
}{4}$ for any $u$ with $u^{4}=1$.

It is straightforward to verify that with these values of $a,b,c,d,e,f$ $\ $%
then $\theta x_{1}$, $\theta x_{2}$, $\ldots ,\theta x_{6},$ satisfy the
defining relations of $L_{p}$ provided $1-d^{2}+de^{2}=0$ and $%
d^{4}+6d^{2}-3=0$. To see this note that the property that 
\[
\lbrack \alpha x_{4}+\beta x_{5}+\gamma x_{6},\delta x_{1}+\varepsilon
x_{2}+\zeta x_{3}]=[\delta x_{4}+\varepsilon x_{5}+\zeta x_{6},\alpha
x_{1}+\beta x_{2}+\gamma x_{3}] 
\]%
for all $\alpha ,\beta ,\gamma ,\delta ,\varepsilon ,\zeta $ implies that%
\[
\lbrack \theta x_{4},\theta x_{1}]=[\theta x_{5},\theta x_{2}], 
\]%
\[
\lbrack \theta x_{4},\theta x_{3}]=[\theta x_{6},\theta x_{1}], 
\]%
\[
\lbrack \theta x_{5},\theta x_{1}]=[\theta x_{4},\theta x_{2}]. 
\]%
Also, $\theta x_{2}$ and $\theta x_{5}$ were chosen so that%
\[
\lbrack \theta x_{5},\theta x_{3}]=[\theta x_{6},\theta x_{2}]=0, 
\]%
and the relations%
\[
\lbrack \theta x_{i},\theta x_{j}]=0\text{ for }i,j\in \{1,2,3\}, 
\]%
\[
\lbrack \theta x_{i},\theta x_{j}]=0\text{ for }i,j\in \{4,5,6\} 
\]%
follow from the fact that $\theta x_{1},\theta x_{2},\theta x_{3}\in \,$Sp$%
\langle x_{1},x_{2},x_{3}\rangle $ and $\theta x_{4},\theta x_{5},\theta
x_{6}\in \,$Sp$\langle x_{4},x_{5},x_{6}\rangle $. So we only need to check
the relations $[\theta x_{4},\theta x_{1}]=[\theta x_{5},\theta x_{2}]$ and $%
[\theta x_{5},\theta x_{1}]=[\theta x_{6},\theta x_{3}]$, and the six
equations (1) ensure that these are satisfied. So we only need to check that 
$a,b,c,d,e,f$ satisfy the equations (1), and this is straightforward.

\section{The covering algebra}

To obtain the covering algebra for $L_{p}$ we need the following defining
relations for $L_{p}$ as a 9 dimensional Lie algebra with vector space basis 
$x_{1},x_{2},\ldots ,x_{9}$.%
\[
\lbrack x_{2},x_{1}]=0, 
\]%
\[
\lbrack x_{3},x_{1}]=0, 
\]%
\[
\lbrack x_{3},x_{2}]=0, 
\]%
\[
\lbrack x_{4},x_{1}]=x_{7}, 
\]%
\[
\lbrack x_{4},x_{2}]=x_{8}, 
\]%
\[
\lbrack x_{4},x_{3}]=x_{9}, 
\]%
\[
\lbrack x_{5},x_{1}]=x_{8}, 
\]%
\[
\lbrack x_{5},x_{2}]=x_{7}, 
\]%
\[
\lbrack x_{5},x_{3}]=0, 
\]%
\[
\lbrack x_{5},x_{4}]=0, 
\]%
\[
\lbrack x_{6},x_{1}]=x_{9}, 
\]%
\[
\lbrack x_{6},x_{2}]=0, 
\]%
\[
\lbrack x_{6},x_{3}]=x_{8}, 
\]%
\[
\lbrack x_{6},x_{4}]=0, 
\]%
\[
\lbrack x_{6},x_{5}]=0, 
\]%
\[
\lbrack x_{7},x_{1}]=0, 
\]%
\[
\lbrack x_{7},x_{2}]=0, 
\]%
\[
\lbrack x_{7},x_{3}]=0, 
\]%
\[
\lbrack x_{7},x_{4}]=0, 
\]%
\[
\lbrack x_{7},x_{5}]=0, 
\]%
\[
\lbrack x_{7},x_{6}]=0, 
\]%
\[
\lbrack x_{8},x_{1}]=0, 
\]%
\[
\lbrack x_{8},x_{2}]=0, 
\]%
\[
\lbrack x_{8},x_{3}]=0, 
\]%
\[
\lbrack x_{8},x_{4}]=0, 
\]%
\[
\lbrack x_{8},x_{5}]=0, 
\]%
\[
\lbrack x_{8},x_{6}]=0, 
\]%
\[
\lbrack x_{9},x_{1}]=0, 
\]%
\[
\lbrack x_{9},x_{2}]=0, 
\]%
\[
\lbrack x_{9},x_{3}]=0, 
\]%
\[
\lbrack x_{9},x_{4}]=0, 
\]%
\[
\lbrack x_{9},x_{5}]=0, 
\]%
\[
\lbrack x_{9},x_{6}]=0. 
\]

This presentation has 33 relations, but the relations $[x_{4},x_{1}]=x_{7}$, 
$[x_{4},x_{2}]=x_{8}$, $[x_{4},x_{3}]=x_{9}$ are taken to be the definitions
of $x_{7},x_{8},x_{9}$. We introduce 30 additional generators $%
x_{10},x_{11},\ldots ,x_{39}$ corresponding to the 30 relations which are 
\emph{not} definitions, and add them as \textquotedblleft
tails\textquotedblright\ to these relations. This gives the following
presentation for the covering algebra:%
\[
\lbrack x_{2},x_{1}]=x_{28}, 
\]%
\[
\lbrack x_{3},x_{1}]=x_{29}, 
\]%
\[
\lbrack x_{3},x_{2}]=x_{30}, 
\]%
\[
\lbrack x_{4},x_{1}]=x_{7}, 
\]%
\[
\lbrack x_{4},x_{2}]=x_{8}, 
\]%
\[
\lbrack x_{4},x_{3}]=x_{9}, 
\]%
\[
\lbrack x_{5},x_{1}]=x_{8}+x_{31}, 
\]%
\[
\lbrack x_{5},x_{2}]=x_{7}+x_{32}, 
\]%
\[
\lbrack x_{5},x_{3}]=x_{33}, 
\]%
\[
\lbrack x_{5},x_{4}]=x_{34}, 
\]%
\[
\lbrack x_{6},x_{1}]=x_{9}+x_{35}, 
\]%
\[
\lbrack x_{6},x_{2}]=x_{36}, 
\]%
\[
\lbrack x_{6},x_{3}]=x_{8}+x_{37}, 
\]%
\[
\lbrack x_{6},x_{4}]=x_{38}, 
\]%
\[
\lbrack x_{6},x_{5}]=x_{39}, 
\]%
\[
\lbrack x_{7},x_{1}]=x_{10}, 
\]%
\[
\lbrack x_{7},x_{2}]=x_{11}, 
\]%
\[
\lbrack x_{7},x_{3}]=x_{12}, 
\]%
\[
\lbrack x_{7},x_{4}]=x_{13}, 
\]%
\[
\lbrack x_{7},x_{5}]=x_{14}, 
\]%
\[
\lbrack x_{7},x_{6}]=x_{15}, 
\]%
\[
\lbrack x_{8},x_{1}]=x_{16}, 
\]%
\[
\lbrack x_{8},x_{2}]=x_{17}, 
\]%
\[
\lbrack x_{8},x_{3}]=x_{18}, 
\]%
\[
\lbrack x_{8},x_{4}]=x_{19}, 
\]%
\[
\lbrack x_{8},x_{5}]=x_{20}, 
\]%
\[
\lbrack x_{8},x_{6}]=x_{21}, 
\]%
\[
\lbrack x_{9},x_{1}]=x_{22}, 
\]%
\[
\lbrack x_{9},x_{2}]=x_{23}, 
\]%
\[
\lbrack x_{9},x_{3}]=x_{24}, 
\]%
\[
\lbrack x_{9},x_{4}]=x_{25}, 
\]%
\[
\lbrack x_{9},x_{5}]=x_{26}, 
\]%
\[
\lbrack x_{9},x_{6}]=x_{27}, 
\]%
where in addition we also have relations implying that the tails are all
central. We now need to enforce the Jacobi identity%
\[
\lbrack x_{i},x_{j},x_{k}]+[x_{j},x_{k},x_{i}]+[x_{k},x_{i},x_{j}]=0 
\]%
for all $i,j,k$ with $1\leq k<j<i\leq 6$. This gives 20 Jacobi relations,
and we evaluate $[x_{i},x_{j},x_{k}]+[x_{j},x_{k},x_{i}]+[x_{k},x_{i},x_{j}]$
in each case.%
\[
\lbrack x_{3},x_{2},x_{1}]+[x_{2},x_{1},x_{3}]+[x_{1},x_{3},x_{2}]=0 
\]%
\[
\lbrack
x_{4},x_{2},x_{1}]+[x_{2},x_{1},x_{4}]+[x_{1},x_{4},x_{2}]=x_{16}-x_{11} 
\]%
\[
\lbrack
x_{4},x_{3},x_{1}]+[x_{3},x_{1},x_{4}]+[x_{1},x_{4},x_{3}]=x_{22}-x_{12} 
\]%
\[
\lbrack
x_{4},x_{3},x_{2}]+[x_{3},x_{2},x_{4}]+[x_{2},x_{4},x_{3}]=x_{23}-x_{18} 
\]%
\[
\lbrack
x_{5},x_{2},x_{1}]+[x_{2},x_{1},x_{5}]+[x_{1},x_{5},x_{2}]=x_{10}-x_{17} 
\]%
\[
\lbrack x_{5},x_{3},x_{1}]+[x_{3},x_{1},x_{5}]+[x_{1},x_{5},x_{3}]=-x_{18} 
\]%
\[
\lbrack x_{5},x_{3},x_{2}]+[x_{3},x_{2},x_{5}]+[x_{2},x_{5},x_{3}]=-x_{12} 
\]%
\[
\lbrack
x_{5},x_{4},x_{1}]+[x_{4},x_{1},x_{5}]+[x_{1},x_{5},x_{4}]=x_{14}-x_{19} 
\]%
\[
\lbrack
x_{5},x_{4},x_{2}]+[x_{4},x_{2},x_{5}]+[x_{2},x_{5},x_{4}]=x_{20}-x_{13} 
\]%
\[
\lbrack x_{5},x_{4},x_{3}]+[x_{4},x_{3},x_{5}]+[x_{3},x_{5},x_{4}]=x_{26} 
\]%
\[
\lbrack x_{6},x_{2},x_{1}]+[x_{2},x_{1},x_{6}]+[x_{1},x_{6},x_{2}]=-x_{23} 
\]%
\[
\lbrack
x_{6},x_{3},x_{1}]+[x_{3},x_{1},x_{6}]+[x_{1},x_{6},x_{3}]=x_{16}-x_{24} 
\]%
\[
\lbrack x_{6},x_{3},x_{2}]+[x_{3},x_{2},x_{6}]+[x_{2},x_{6},x_{3}]=x_{17} 
\]%
\[
\lbrack
x_{6},x_{4},x_{1}]+[x_{4},x_{1},x_{6}]+[x_{1},x_{6},x_{4}]=x_{15}-x_{25} 
\]%
\[
\lbrack x_{6},x_{4},x_{2}]+[x_{4},x_{2},x_{6}]+[x_{2},x_{6},x_{4}]=x_{21} 
\]%
\[
\lbrack
x_{6},x_{4},x_{3}]+[x_{4},x_{3},x_{6}]+[x_{3},x_{6},x_{4}]=x_{27}-x_{19} 
\]%
\[
\lbrack
x_{6},x_{5},x_{1}]+[x_{5},x_{1},x_{6}]+[x_{1},x_{6},x_{5}]=x_{21}-x_{26} 
\]%
\[
\lbrack x_{6},x_{5},x_{2}]+[x_{5},x_{2},x_{6}]+[x_{2},x_{6},x_{5}]=x_{15} 
\]%
\[
\lbrack x_{6},x_{5},x_{3}]+[x_{5},x_{3},x_{6}]+[x_{3},x_{6},x_{5}]=-x_{20} 
\]%
\[
\lbrack x_{6},x_{5},x_{4}]+[x_{5},x_{4},x_{6}]+[x_{4},x_{6},x_{5}]=0 
\]

So the Jacobi relations give the following:%
\begin{eqnarray*}
x_{10}
&=&x_{12}=x_{13}=x_{15}=x_{17}=x_{18}=x_{20}=x_{21}=x_{22}=x_{23}=x_{25}=x_{26}=0,
\\
x_{11} &=&x_{16}=x_{24}, \\
x_{14} &=&x_{19}=x_{27}.
\end{eqnarray*}%
If we enforce these relations, and relabel the generators, then we obtain
the following presentation for the covering algebra.%
\[
\lbrack x_{2},x_{1}]=x_{12}, 
\]%
\[
\lbrack x_{3},x_{1}]=x_{13}, 
\]%
\[
\lbrack x_{3},x_{2}]=x_{14}, 
\]%
\[
\lbrack x_{4},x_{1}]=x_{7}, 
\]%
\[
\lbrack x_{4},x_{2}]=x_{8}, 
\]%
\[
\lbrack x_{4},x_{3}]=x_{9}, 
\]%
\[
\lbrack x_{5},x_{1}]=x_{8}+x_{15}, 
\]%
\[
\lbrack x_{5},x_{2}]=x_{7}+x_{16}, 
\]%
\[
\lbrack x_{5},x_{3}]=x_{17}, 
\]%
\[
\lbrack x_{5},x_{4}]=x_{18}, 
\]%
\[
\lbrack x_{6},x_{1}]=x_{9}+x_{19}, 
\]%
\[
\lbrack x_{6},x_{2}]=x_{20}, 
\]%
\[
\lbrack x_{6},x_{3}]=x_{8}+x_{21}, 
\]%
\[
\lbrack x_{6},x_{4}]=x_{22}, 
\]%
\[
\lbrack x_{6},x_{5}]=x_{23}, 
\]%
\[
\lbrack x_{7},x_{1}]=0, 
\]%
\[
\lbrack x_{7},x_{2}]=x_{10}, 
\]%
\[
\lbrack x_{7},x_{3}]=0, 
\]%
\[
\lbrack x_{7},x_{4}]=0, 
\]%
\[
\lbrack x_{7},x_{5}]=x_{11}, 
\]%
\[
\lbrack x_{7},x_{6}]=0, 
\]%
\[
\lbrack x_{8},x_{1}]=x_{10}, 
\]%
\[
\lbrack x_{8},x_{2}]=0, 
\]%
\[
\lbrack x_{8},x_{3}]=0, 
\]%
\[
\lbrack x_{8},x_{4}]=x_{11}, 
\]%
\[
\lbrack x_{8},x_{5}]=0, 
\]%
\[
\lbrack x_{8},x_{6}]=0, 
\]%
\[
\lbrack x_{9},x_{1}]=0, 
\]%
\[
\lbrack x_{9},x_{2}]=0, 
\]%
\[
\lbrack x_{9},x_{3}]=x_{10}, 
\]%
\[
\lbrack x_{9},x_{4}]=0, 
\]%
\[
\lbrack x_{9},x_{5}]=0, 
\]%
\[
\lbrack x_{9},x_{6}]=x_{11}, 
\]%
together with relations which imply that $x_{10},x_{11},\ldots .x_{23}$ are
central. Call this covering algebra $M$. Then $M$ has dimension 23, and the
nucleus of $M$ is $[M,M,M]$ which has dimension 2 and is spanned by $x_{10}$
and $x_{11}$ (with $x_{10}=[x_{4},x_{1},x_{2}]$ and $%
x_{11}=[x_{4},x_{1},x_{5}]$). The immediate descendants of $L_{p}$ are
algebras $M/I$, where $I$ is a proper subspace of Sp$\langle
x_{10},x_{11},\ldots ,x_{23}\rangle $ such that 
\[
I+\text{Sp}\langle x_{10},x_{11}\rangle =\text{Sp}\langle
x_{10},x_{11},\ldots ,x_{23}\rangle . 
\]%
Thus $L_{p}$ has immediate descendants of dimension 10 and 11.

\section{Descendants of $L_{p}$ of dimension \ 10}

Let $\left[ 
\begin{array}{cc}
\alpha & \beta \\ 
\gamma & \delta%
\end{array}%
\right] \in \,$GL$(2,p)$, and let%
\begin{eqnarray*}
y_{1} &=&\alpha x_{1}+\beta x_{4}, \\
y_{2} &=&\alpha x_{2}+\beta x_{5}, \\
y_{3} &=&\alpha x_{3}+\beta x_{6}, \\
y_{4} &=&\gamma x_{1}+\delta x_{4}, \\
y_{5} &=&\gamma x_{2}+\delta x_{5}, \\
y_{6} &=&\gamma x_{3}+\delta x_{6}.
\end{eqnarray*}%
Then, $[y_{4},y_{1}]=(\alpha \delta -\beta \gamma )[x_{4},x_{1}]$, and%
\begin{eqnarray*}
\lbrack y_{4},y_{1},y_{2}] &=&\alpha (\alpha \delta -\beta \gamma
)x_{10}+\beta (\alpha \delta -\beta \gamma )x_{11}, \\
\lbrack y_{4},y_{1},y_{5}] &=&\gamma (\alpha \delta -\beta \gamma
)x_{10}+\delta (\alpha \delta -\beta \gamma )x_{11}.
\end{eqnarray*}%
This means that if $M/I$ is an immediate descendant of $L_{p}$ of dimension
10, then we can choose $\left[ 
\begin{array}{cc}
\alpha & \beta \\ 
\gamma & \delta%
\end{array}%
\right] $ so that $[y_{4},y_{1},y_{2}]+I$ generates Sp$\langle
x_{10},x_{11},\ldots ,x_{23}\rangle /I$, and so that $[y_{4},y_{1},y_{5}]\in
I$. Note that if we let $J=$Sp$\langle x_{10},x_{11},\ldots ,x_{23}\rangle $
then $M/J$ is isomorphic to $L_{p}$ and the map $x_{i}+J\mapsto y_{i}+J$ for 
$i=1,2,\ldots ,6$ extends to an automorphism of $L_{p}$. So every immediate
descendant of $L_{p}$ of dimension 10 has a presentation on generators $%
x_{1},x_{2},\ldots ,x_{10}$ with relations%
\begin{equation}
\lbrack x_{2},x_{1}]=\varepsilon x_{10},
\end{equation}%
\[
\lbrack x_{3},x_{1}]=\zeta x_{10}, 
\]%
\[
\lbrack x_{3},x_{2}]=\eta x_{10}, 
\]%
\[
\lbrack x_{4},x_{1}]=x_{7}, 
\]%
\[
\lbrack x_{4},x_{2}]=x_{8}, 
\]%
\[
\lbrack x_{4},x_{3}]=x_{9}, 
\]%
\[
\lbrack x_{5},x_{1}]=x_{8}+\theta x_{10}, 
\]%
\[
\lbrack x_{5},x_{2}]=x_{7}+\kappa x_{10}, 
\]%
\[
\lbrack x_{5},x_{3}]=\lambda x_{10}, 
\]%
\[
\lbrack x_{5},x_{4}]=\mu x_{10}, 
\]%
\[
\lbrack x_{6},x_{1}]=x_{9}+\nu x_{10}, 
\]%
\[
\lbrack x_{6},x_{2}]=\xi x_{10}, 
\]%
\[
\lbrack x_{6},x_{3}]=x_{8}+\pi x_{10}, 
\]%
\[
\lbrack x_{6},x_{4}]=\rho x_{10}, 
\]%
\[
\lbrack x_{6},x_{5}]=\sigma x_{10}, 
\]%
\[
\lbrack x_{7},x_{1}]=0, 
\]%
\[
\lbrack x_{7},x_{2}]=x_{10}, 
\]%
\[
\lbrack x_{7},x_{3}]=0, 
\]%
\[
\lbrack x_{7},x_{4}]=0, 
\]%
\[
\lbrack x_{7},x_{5}]=0, 
\]%
\[
\lbrack x_{7},x_{6}]=0, 
\]%
\[
\lbrack x_{8},x_{1}]=x_{10}, 
\]%
\[
\lbrack x_{8},x_{2}]=0, 
\]%
\[
\lbrack x_{8},x_{3}]=0, 
\]%
\[
\lbrack x_{8},x_{4}]=0, 
\]%
\[
\lbrack x_{8},x_{5}]=0, 
\]%
\[
\lbrack x_{8},x_{6}]=0, 
\]%
\[
\lbrack x_{9},x_{1}]=0, 
\]%
\[
\lbrack x_{9},x_{2}]=0, 
\]%
\[
\lbrack x_{9},x_{3}]=x_{10}, 
\]%
\[
\lbrack x_{9},x_{4}]=0, 
\]%
\[
\lbrack x_{9},x_{5}]=0, 
\]%
\[
\lbrack x_{9},x_{6}]=0, 
\]%
\[
\lbrack x_{10},x_{1}]=0, 
\]%
\[
\lbrack x_{10},x_{2}]=0, 
\]%
\[
\lbrack x_{10},x_{3}]=0, 
\]%
\[
\lbrack x_{10},x_{4}]=0, 
\]%
\[
\lbrack x_{10},x_{5}]=0, 
\]%
\[
\lbrack x_{10},x_{6}]=0, 
\]%
for some scalars $\varepsilon ,\zeta ,\ldots ,\sigma $.

If we take this presentation, and let%
\begin{eqnarray*}
y_{1} &=&x_{1}, \\
y_{2} &=&x_{2}-\varepsilon x_{8}, \\
y_{3} &=&x_{3}-\eta x_{7}-\zeta x_{8}, \\
y_{4} &=&x_{4}, \\
y_{5} &=&x_{5}-\kappa x_{7}-\theta x_{8}-\lambda x_{9}, \\
y_{6} &=&x_{6}-\xi x_{7}-\nu x_{8}-\pi x_{9},
\end{eqnarray*}%
then%
\[
\lbrack y_{2},y_{1}]=[x_{2},x_{1}]-\varepsilon \lbrack x_{8},x_{1}]=0, 
\]%
\[
\lbrack y_{3},y_{1}]=[x_{3},x_{1}]-\eta \lbrack x_{7},x_{1}]-\zeta \lbrack
x_{8},x_{1}]=0, 
\]%
\[
\lbrack y_{3},y_{2}]=[x_{3},x_{2}]-\eta \lbrack x_{7},x_{2}]-\zeta \lbrack
x_{8},x_{2}]+\varepsilon \lbrack x_{8},x_{3}]=0, 
\]%
\[
\lbrack y_{4},y_{1}]=[x_{4},x_{1}]=x_{7}, 
\]%
\[
\lbrack y_{4},y_{2}]=[x_{4},x_{2}]+\varepsilon \lbrack x_{8},x_{4}]=x_{8}, 
\]%
\[
\lbrack y_{4},y_{3}]=[x_{4},x_{3}]+\eta \lbrack x_{7},x_{4}]+\zeta \lbrack
x_{8},x_{4}]=x_{9}, 
\]%
\[
\lbrack y_{5},y_{1}]=[x_{5},x_{1}]-\kappa \lbrack x_{7},x_{1}]-\theta
\lbrack x_{8},x_{1}]-\lambda \lbrack x_{9},x_{1}]=x_{8}, 
\]%
\[
\lbrack y_{5},y_{2}]=[x_{5},x_{2}]-\kappa \lbrack x_{7},x_{2}]-\theta
\lbrack x_{8},x_{2}]-\lambda \lbrack x_{9},x_{2}]+\varepsilon \lbrack
x_{8},x_{5}]=x_{7}, 
\]%
\[
\lbrack y_{5},y_{3}]=[x_{5},x_{3}]-\kappa \lbrack x_{7},x_{3}]-\theta
\lbrack x_{8},x_{3}]-\lambda \lbrack x_{9},x_{3}]+\eta \lbrack
x_{7},x_{5}]+\zeta \lbrack x_{8},x_{5}]=0, 
\]%
\[
\lbrack y_{5},y_{4}]=[x_{5},x_{4}]-\kappa \lbrack x_{7},x_{4}]-\theta
\lbrack x_{8},x_{4}]-\lambda \lbrack x_{9},x_{4}]=\mu x_{10}, 
\]%
\[
\lbrack y_{6},y_{1}]=[x_{6},x_{1}]-\xi \lbrack x_{7},x_{1}]-\nu \lbrack
x_{8},x_{1}]-\pi \lbrack x_{9},x_{1}]=x_{9}, 
\]%
\[
\lbrack y_{6},y_{2}]=[x_{6},x_{2}]-\xi \lbrack x_{7},x_{2}]-\nu \lbrack
x_{8},x_{2}]-\pi \lbrack x_{9},x_{2}]+\varepsilon \lbrack x_{8},x_{6}]=0, 
\]%
\[
\lbrack y_{6},y_{3}]=[x_{6},x_{3}]-\xi \lbrack x_{7},x_{3}]-\nu \lbrack
x_{8},x_{3}]-\pi \lbrack x_{9},x_{3}]+\kappa \lbrack x_{7},x_{6}]+\theta
\lbrack x_{8},x_{6}]+\lambda \lbrack x_{9},x_{6}]=x_{8}, 
\]%
\[
\lbrack y_{6},y_{4}]=[x_{6},x_{4}]-\xi \lbrack x_{7},x_{4}]-\nu \lbrack
x_{8},x_{4}]-\pi \lbrack x_{9},x_{4}]=\rho x_{10}, 
\]%
\[
\lbrack y_{6},y_{5}]=[x_{6},x_{5}]-\xi \lbrack x_{7},x_{5}]-\nu \lbrack
x_{8},x_{5}]-\pi \lbrack x_{9},x_{5}]+\kappa \lbrack x_{7},x_{6}]+\theta
\lbrack x_{8},x_{6}]+\lambda \lbrack x_{9},x_{6}]=\sigma x_{10}. 
\]%
And if we define $y_{7}=x_{7}$, $y_{8}=x_{8}$, $y_{9}=x_{9}$, $y_{10}=x_{10}$
then we obtain the relations 
\[
\lbrack y_{7},y_{1}]=0, 
\]%
\[
\lbrack y_{7},y_{2}]=y_{10}, 
\]%
\[
\lbrack y_{7},y_{3}]=0, 
\]%
\[
\lbrack y_{7},y_{4}]=0, 
\]%
\[
\lbrack y_{7},y_{5}]=0, 
\]%
\[
\lbrack y_{7},y_{6}]=0, 
\]%
\[
\lbrack y_{8},y_{1}]=y_{10}, 
\]%
\[
\lbrack y_{8},y_{2}]=0, 
\]%
\[
\lbrack y_{8},y_{3}]=0, 
\]%
\[
\lbrack y_{8},y_{4}]=0, 
\]%
\[
\lbrack y_{8},y_{5}]=0, 
\]%
\[
\lbrack y_{8},y_{6}]=0, 
\]%
\[
\lbrack y_{9},y_{1}]=0, 
\]%
\[
\lbrack y_{9},y_{2}]=0, 
\]%
\[
\lbrack y_{9},y_{3}]=y_{10}, 
\]%
\[
\lbrack y_{9},y_{4}]=0, 
\]%
\[
\lbrack y_{9},y_{5}]=0, 
\]%
\[
\lbrack y_{9},y_{6}]=0, 
\]%
\[
\lbrack y_{10},y_{1}]=0, 
\]%
\[
\lbrack y_{10},y_{2}]=0, 
\]%
\[
\lbrack y_{10},y_{3}]=0, 
\]%
\[
\lbrack y_{10},y_{4}]=0, 
\]%
\[
\lbrack y_{10},y_{5}]=0, 
\]%
\[
\lbrack y_{10},y_{6}]=0. 
\]

It follows that every immediate descendant of $L_{p}$ of dimension 10 has a
presentation on generators $x_{1},x_{2},\ldots ,x_{10}$ with relations%
\[
\lbrack x_{4},x_{1}]=x_{7}, 
\]%
\[
\lbrack x_{4},x_{2}]=x_{8}, 
\]%
\[
\lbrack x_{4},x_{3}]=x_{9}, 
\]%
\[
\lbrack x_{5},x_{1}]=x_{8}, 
\]%
\[
\lbrack x_{5},x_{2}]=x_{7}, 
\]%
\[
\lbrack x_{5},x_{4}]=\mu x_{10}, 
\]%
\[
\lbrack x_{6},x_{1}]=x_{9}, 
\]%
\[
\lbrack x_{6},x_{3}]=x_{8}, 
\]%
\[
\lbrack x_{6},x_{4}]=\rho x_{10}, 
\]%
\[
\lbrack x_{6},x_{5}]=\sigma x_{10}, 
\]%
\[
\lbrack x_{7},x_{2}]=x_{10}, 
\]%
\[
\lbrack x_{8},x_{1}]=x_{10}, 
\]%
\[
\lbrack x_{9},x_{3}]=x_{10}, 
\]%
for some scalars $\mu ,\rho ,\sigma $, and with all other commutators $%
[x_{i},x_{j}]$ with $i>j$ trivial.

\section{Counting the descendants of dimension 10}

As we showed above, every immediate descendant of $L_{p}$ of dimension 10
has a presentation on generators $x_{1},x_{2},\ldots ,x_{10}$ with relations%
\[
\lbrack x_{4},x_{1}]=x_{7}, 
\]%
\[
\lbrack x_{4},x_{2}]=x_{8}, 
\]%
\[
\lbrack x_{4},x_{3}]=x_{9}, 
\]%
\[
\lbrack x_{5},x_{1}]=x_{8}, 
\]%
\[
\lbrack x_{5},x_{2}]=x_{7}, 
\]%
\[
\lbrack x_{5},x_{4}]=\lambda x_{10}, 
\]%
\[
\lbrack x_{6},x_{1}]=x_{9}, 
\]%
\[
\lbrack x_{6},x_{3}]=x_{8}, 
\]%
\[
\lbrack x_{6},x_{4}]=\mu x_{10}, 
\]%
\[
\lbrack x_{6},x_{5}]=\nu x_{10}, 
\]%
\[
\lbrack x_{7},x_{2}]=x_{10}, 
\]%
\[
\lbrack x_{8},x_{1}]=x_{10}, 
\]%
\[
\lbrack x_{9},x_{3}]=x_{10}, 
\]%
for some scalars $\lambda ,\mu ,\nu $, and with all other commutators $%
[x_{i},x_{j}]$ with $i>j$ trivial. Denote this algebra by $A_{(\lambda ,\mu
,\nu )}$. The isomorphism type of $A_{(\lambda ,\mu ,\nu )}$ is determined
by the triple $(\lambda ,\mu ,\nu )$, but we still need to solve the problem
of when two different triples give isomorphic algebras. Suppose that $%
A_{(\lambda ,\mu ,\nu )}$ is isomorphic to $A_{(\lambda ^{\prime },\mu
^{\prime },\nu ^{\prime })}$, and let $\theta :A_{(\lambda ^{\prime },\mu
^{\prime },\nu ^{\prime })}\rightarrow A_{(\lambda ,\mu ,\nu )}$ be an
isomorphism. Let $y_{1},y_{2},\ldots ,y_{6}$ be the images in $A_{(\lambda
,\mu ,\nu )}$ under $\theta $ of the defining generators of $A_{(\lambda
^{\prime },\mu ^{\prime },\nu ^{\prime })}$. Note that $A_{(\lambda ,\mu
,\nu )}/\langle x_{10}\rangle $ is isomorphic to $L_{p}$, and that the map $%
x_{i}+\langle x_{10}\rangle \mapsto y_{i}+\langle x_{10}\rangle $ $%
(i=1,2,\ldots ,6)$ extends to an automorphism of $L_{p}$. Note also that 
\begin{eqnarray*}
&&C_{A_{(\lambda ,\mu ,\nu )}}([A_{(\lambda ,\mu ,\nu )},A_{(\lambda ,\mu
,\nu )}]) \\
&=&[A_{(\lambda ,\mu ,\nu )},A_{(\lambda ,\mu ,\nu )}]+\text{Sp}\langle
x_{4},x_{5},x_{6}\rangle \\
&=&[A_{(\lambda ,\mu ,\nu )},A_{(\lambda ,\mu ,\nu )}]+\text{Sp}\langle
y_{4},y_{5},y_{6}\rangle .
\end{eqnarray*}%
It follows that $A_{(\lambda ,\mu ,\nu )}$ is isomorphic to $A_{(\lambda
^{\prime },\mu ^{\prime },\nu ^{\prime })}$ if and only if $A_{(\lambda ,\mu
,\nu )}$ has a set of generators $y_{1},y_{2},\ldots ,y_{6}$ satisfying the
defining relations of $A_{(\lambda ^{\prime },\mu ^{\prime },\nu ^{\prime
})} $, and that this can only happen if the map $x_{i}+\langle x_{10}\rangle
\mapsto y_{i}+\langle x_{10}\rangle $ $(i=1,2,\ldots ,6)$ extends to an
automorphism of $L_{p}$, and if%
\begin{equation}
\lbrack A_{(\lambda ,\mu ,\nu )},A_{(\lambda ,\mu ,\nu )}]+\text{Sp}\langle
x_{4},x_{5},x_{6}\rangle =[A_{(\lambda ,\mu ,\nu )},A_{(\lambda ,\mu ,\nu
)}]+\text{Sp}\langle y_{4},y_{5},y_{6}\rangle .
\end{equation}

The first thing to observe is that if we let $y_{1}=x_{1}$, $y_{2}=x_{2}$, $%
y_{3}=x_{3}$, $y_{4}=\delta x_{4}$, $y_{5}=\delta x_{5}$, $y_{6}=\delta
x_{6} $ in $A_{(\lambda ,\mu ,\nu )}$, then $y_{1},y_{2},\ldots ,y_{6}$
satisfy the defining relations of $A_{(\delta \lambda ,\delta \mu ,\delta
\nu )}$. (This is easy to check.) So the triples $(\lambda ,\mu ,\nu )$ and $%
(\delta \lambda ,\delta \mu ,\delta \nu )$ define isomorphic algebras, and
the isomorphism type of $A_{(\lambda ,\mu ,\nu )}$ depends only on the
ratios $\lambda :\mu $, $\lambda :\nu $, $\mu :\nu $. The next thing to note
is that if $y_{1},y_{2},\ldots ,y_{6}\in A_{(\lambda ,\mu ,\nu )}$ satisfy
the defining relations of $A_{(\lambda ^{\prime },\mu ^{\prime },\nu
^{\prime })} $ then the ratios $\lambda ^{\prime }:\mu ^{\prime }$, $\lambda
^{\prime }:\nu ^{\prime }$, $\mu ^{\prime }:\nu ^{\prime }$ depend only on
the values of $y_{4},y_{5},y_{6}$, and not on the values of $%
y_{1},y_{2},y_{3}$. The calculations in Section 7, together with equation
(4) and the fact that the map $x_{i}+\langle x_{10}\rangle \mapsto
y_{i}+\langle x_{10}\rangle $ $(i=1,2,\ldots ,6)$ extends to an automorphism
of $L_{p}$, imply that%
\[
\left[ \;%
\begin{array}{c}
y_{4} \\ 
y_{5} \\ 
y_{6}%
\end{array}%
\right] =\delta A\left[ \;%
\begin{array}{c}
x_{4} \\ 
x_{5} \\ 
x_{6}%
\end{array}%
\right] +\left[ \;%
\begin{array}{c}
b_{1} \\ 
b_{2} \\ 
b_{3}%
\end{array}%
\right] 
\]%
where $b_{1},b_{2},b_{3}\in \lbrack A_{(\lambda ,\mu ,\nu )},A_{(\lambda
,\mu ,\nu )}]$, where $\delta \neq 0$, and where%
\[
A=\left[ 
\begin{array}{ccc}
u & 0 & 0 \\ 
0 & u^{-1} & 0 \\ 
0 & 0 & 1%
\end{array}%
\right] 
\]%
with $u^{4}=1$, or%
\[
A=\left[ 
\begin{array}{ccc}
a & ab & ac \\ 
df & -f & -def \\ 
1 & d & e%
\end{array}%
\right] , 
\]%
as described in Section 7. Furthermore, since $x_{4},x_{5},x_{6}$ centralize 
$[A_{(\lambda ,\mu ,\nu )},A_{(\lambda ,\mu ,\nu )}]$ the values of $%
[y_{5},y_{4}]$, $[y_{6},y_{4}]$, $[y_{6},y_{5}]$ depend only on $\delta A$,
and not on $b_{1},b_{2},b_{3}$.

We now show that%
\[
\left[ \;%
\begin{array}{c}
y_{4} \\ 
y_{5} \\ 
y_{6}%
\end{array}%
\right] =\delta A\left[ \;%
\begin{array}{c}
x_{4} \\ 
x_{5} \\ 
x_{6}%
\end{array}%
\right] 
\]%
can arise for all $\delta A$ of the form just described. Specifically, we
show that if we set%
\[
\left[ \;%
\begin{array}{c}
y_{1} \\ 
y_{2} \\ 
y_{3}%
\end{array}%
\right] =\alpha A\left[ \;%
\begin{array}{c}
x_{1} \\ 
x_{2} \\ 
x_{3}%
\end{array}%
\right] ,\;\left[ \;%
\begin{array}{c}
y_{4} \\ 
y_{5} \\ 
y_{6}%
\end{array}%
\right] =\delta A\left[ \;%
\begin{array}{c}
x_{4} \\ 
x_{5} \\ 
x_{6}%
\end{array}%
\right] 
\]%
where $\alpha ,\delta \neq 0$, and where $A$ is as just described, then $%
y_{1},y_{2},\ldots ,y_{6}$ \emph{do} satisfy the defining relations of the
algebra $A_{(\lambda ^{\prime },\mu ^{\prime },\nu ^{\prime })}$, for some $%
(\lambda ^{\prime },\mu ^{\prime },\nu ^{\prime })$ which we will determine
below. One possible way of checking this is to compute $[y_{i},y_{j}]$ in
terms of $x_{7},x_{8},x_{9},x_{10}$ for all $i>j$, and check all the
relations one by one. But there is a shortcut. We know that the map $%
x_{i}+\langle x_{10}\rangle \mapsto y_{i}+\langle x_{10}\rangle $ $%
(i=1,2,\ldots ,6)$ extends to an automorphism of $L_{p}$. We also know that $%
y_{4},y_{5},y_{6}$ centralize $[A_{(\lambda ,\mu ,\nu )},A_{(\lambda ,\mu
,\nu )}]$. In particular $[y_{4},y_{1},y_{5}]=0$. So if we set $%
y_{7}=[y_{4},y_{1}]$, $y_{8}=[y_{4},y_{2}]$, $y_{9}=[y_{4},y_{3}]$, $%
y_{10}=[y_{4},y_{1},y_{2}]$, then $y_{1},y_{2},\ldots ,y_{10}$ must satisfy
relations of the form (3) for some scalars $\varepsilon ,\zeta ,\ldots
,\sigma $. However we must have $\varepsilon =\zeta =\eta =0$ since the
linear span of $y_{1},y_{2},y_{3}$ is the same as the linear span of $%
x_{1},x_{2},x_{3}$, and 
\[
\lbrack x_{2},x_{1}]=[x_{3},x_{1}]=[x_{3},x_{2}]=0. 
\]%
We must also have $\theta =\kappa =\lambda =\nu =\xi =\pi =0$ since if $%
4\leq r\leq 6$ and $1\leq s\leq 3$ then 
\[
\lbrack y_{r},y_{s}]\in \text{Sp}\langle \lbrack x_{i},x_{j}]\,|\,i\in
\{4,5,6\},\;j\in \{1,2,3\}\}=\text{Sp}\langle x_{7},x_{8},x_{9}\rangle . 
\]

It remains to compute $[y_{5},y_{4}]$, $[y_{6},y_{4}]$, $[y_{6},y_{5}]$.

First consider the case when 
\[
A=\left[ 
\begin{array}{ccc}
u & 0 & 0 \\ 
0 & u^{-1} & 0 \\ 
0 & 0 & 1%
\end{array}%
\right] . 
\]%
Then%
\begin{eqnarray*}
\lbrack y_{5},y_{4}] &=&\delta ^{2}[x_{5},x_{4}]=\delta ^{2}\lambda x_{10},
\\
\lbrack y_{6},y_{4}] &=&\delta ^{2}u[x_{6},x_{4}]=\delta ^{2}u\mu x_{10}, \\
\lbrack y_{6},y_{5}] &=&\delta ^{2}u^{-1}[x_{6},x_{5}]=\delta ^{2}u^{-1}\nu
x_{10}.
\end{eqnarray*}%
Now 
\[
y_{10}=[y_{4},y_{1},y_{2}]=\alpha ^{2}\delta u[x_{4},x_{1},x_{2}]=\alpha
^{2}\delta ux_{10}, 
\]
and so $y_{1},y_{2},\ldots ,y_{10}$ satsify the defining relations of $%
A_{(k\lambda ,ku\mu ,ku^{-1}\nu )}$ where $k=\alpha ^{-2}\delta u^{-1}$.
Note that as $\alpha $ and $\delta $ take on all possible non-zero values, $%
k $ takes on all possible non-zero values.

Next consider the case when%
\[
A=\left[ 
\begin{array}{ccc}
a & ab & ac \\ 
df & -f & -def \\ 
1 & d & e%
\end{array}%
\right] . 
\]%
Then%
\begin{eqnarray*}
\lbrack y_{5},y_{4}] &=&\delta ^{2}\left(
-(af+abdf)[x_{5},x_{4}]-(adef+acdf)[x_{6},x_{4}]-(abdef-acf)[x_{6},x_{5}]%
\right) , \\
\lbrack y_{6},y_{4}] &=&\delta ^{2}\left(
(ad-ab)[x_{5},x_{4}]+(ae-ac)[x_{6},x_{4}]+(abe-acd)[x_{6},x_{5}]\right) , \\
\lbrack y_{6},y_{5}] &=&\delta ^{2}\left(
(d^{2}f+f)[x_{5},x_{4}]+2def[x_{6},x_{4}]-(ef-d^{2}ef)[x_{6},x_{5}]\right) .
\end{eqnarray*}%
So $y_{1},y_{2},\ldots ,y_{10}$ satsify the defining relations of $%
A_{(\lambda ^{\prime },\mu ^{\prime },\nu ^{\prime })}$ where%
\[
\left[ \;%
\begin{array}{c}
\lambda ^{\prime } \\ 
\mu ^{\prime } \\ 
\nu ^{\prime }%
\end{array}%
\right] =k\left[ 
\begin{array}{ccc}
-abdf-af & -acdf-adef & -abdef+acf \\ 
-ab+ad & -ac+ae & abe-acd \\ 
d^{2}f+f & 2def & d^{2}ef-ef%
\end{array}%
\right] \left[ \;%
\begin{array}{c}
\lambda \\ 
\mu \\ 
\nu%
\end{array}%
\right] , 
\]%
for some non-zero scalar $k$ which takes on all possible values as $\alpha $
and $\delta $ take on all possible values. Substituting the solutions for $%
a,c,d,f$ in terms of $d$ and $e$, and using the fact that $e^{2}=\frac{%
d^{2}-1}{d}$, we obtain%
\[
\left[ \;%
\begin{array}{c}
\lambda ^{\prime } \\ 
\mu ^{\prime } \\ 
\nu ^{\prime }%
\end{array}%
\right] =k\left[ 
\begin{array}{ccc}
\frac{1}{2d}\left( d^{2}+1\right) ^{2} & -e\left( d^{2}+1\right) & \frac{1}{%
2d}e\left( d^{4}+4d^{2}+3\right) \\ 
\frac{1}{2}du\frac{e}{d^{2}-1}\left( d^{2}+1\right) ^{2} & -\frac{u}{d}%
\left( d^{2}+1\right) & -\frac{1}{2}u\left( d^{4}+4d^{2}+3\right) \\ 
2u^{-1}e & 4u^{-1}\frac{d^{2}-1}{d^{2}+1} & \frac{2u^{-1}}{d}\frac{\left(
d^{2}-1\right) ^{2}}{d^{2}+1}%
\end{array}%
\right] \left[ \;%
\begin{array}{c}
\lambda \\ 
\mu \\ 
\nu%
\end{array}%
\right] . 
\]

So we have an action on $\mathbb{F}_{p}^{3}$ of the form%
\begin{equation}
\left[ \;%
\begin{array}{c}
\lambda \\ 
\mu \\ 
\nu%
\end{array}%
\right] \rightarrow kB\left[ \;%
\begin{array}{c}
\lambda \\ 
\mu \\ 
\nu%
\end{array}%
\right] ,
\end{equation}%
where $k$ is an arbitrary non-zero scalar, and where $B$ is a matrix of the
form%
\begin{equation}
B=\left[ 
\begin{array}{ccc}
1 & 0 & 0 \\ 
0 & u & 0 \\ 
0 & 0 & u^{-1}%
\end{array}%
\right]
\end{equation}%
(with $u^{4}=1$) or a matrix of the form%
\begin{equation}
B=\left[ 
\begin{array}{ccc}
\frac{1}{2d}\left( d^{2}+1\right) ^{2} & -e\left( d^{2}+1\right) & \frac{1}{%
2d}e\left( d^{4}+4d^{2}+3\right) \\ 
\frac{1}{2}du\frac{e}{d^{2}-1}\left( d^{2}+1\right) ^{2} & -\frac{u}{d}%
\left( d^{2}+1\right) & -\frac{1}{2}u\left( d^{4}+4d^{2}+3\right) \\ 
2u^{-1}e & 4u^{-1}\frac{d^{2}-1}{d^{2}+1} & \frac{2u^{-1}}{d}\frac{\left(
d^{2}-1\right) ^{2}}{d^{2}+1}%
\end{array}%
\right]
\end{equation}%
(with $d$ and $e$ solutions of $d^{4}+6d^{2}-3=0$ and $1-d^{2}+de^{2}=0$ and
with $u^{4}=1$). The actual matrices that occur depend on the residue class
of $p$ modulo 12. If $p=1\mod{12}$ then we have 4 matrices of the form
(6) and either 0 or 32 matrices of the form (7). So when $p=1\mod{12}$
we either have a group of order $4(p-1)$ acting on $\mathbb{F}_{p}^{3}$, or
we have a group of order $36(p-1)$. If $p=5\mod{12}$ then we have 4
matrices of the form (6) and none of the form (7). So we have a group of
order $4(p-1)$ acting on $\mathbb{F}_{p}^{3}$. If $p=7\mod{12}$ then
there are 2 matrices of the form (6) and none of the form (7), so we have a
group of order $2(p-1)$ acting on $\mathbb{F}_{p}^{3}$. Finally, if $p=11%
\mod{12}$ then we have 2 matrices of the form (6) and 4 matrices of the
form (7), so that we have a group of order $6(p-1)$ acting on $\mathbb{F}%
_{p}^{3}$.

The number of isomorphism classes of algebras $A_{(\lambda ,\mu ,\nu )}$ is
the number of orbits in the action of these groups on $\mathbb{F}_{p}^{3}$.
We compute the number of orbits in each case by computing the number of
vectors in $\mathbb{F}_{p}^{3}$ fixed by each transformation of the form
(5). First note that all the transformations fix $\left[ \;%
\begin{array}{c}
0 \\ 
0 \\ 
0%
\end{array}%
\right] $. On the other hand, a non-zero vector $\left[ \;%
\begin{array}{c}
\lambda \\ 
\mu \\ 
\nu%
\end{array}%
\right] $ can only be fixed by a transformation of the form (5) if it is an
eigenvector of $B$, and in that case it is fixed if and only if $k$ is the
multiplicative inverse of the eigenvalue. So we need to count the (non-zero)
eigenvectors for each of the matrices $B$. A matrix of the form (6) has $%
p^{3}-1$ eigenvectors if $u=1$, $p^{2}+p-2$ eigenvectors if $u=-1$, and $%
3p-3 $ eigenvectors if $u^{2}=-1$. If $u=1$ then a matrix of the form (7)
has characteristic polynomial $x^{3}-\frac{256}{3}(d^{3}-d)$, and an
eigenvalue $-\frac{4}{3}(d^{3}+3d)$. So if $p=11\mod{12}$ then the
matrix has a single eigenvalue of multiplicity 1, and $p-1$ eigenvectors.
But if $p=1\mod{12}$ then the matrix has 3 distinct eigenvalues and $%
3p-3 $ eigenvectors. If $u=-1$ then a matrix of the form (7) is
diagonalizable with eigenvalues $\frac{4}{3}(d^{3}+3d)$, $\frac{4}{3}%
(d^{3}+3d)$, $-\frac{4}{3}(d^{3}+3d)$, and $p^{2}+p-2$ eigenvectors.
Finally, if $u^{2}=-1$ then a matrix of the form (7) has 3 distinct
eigenvalues $-4du+\frac{2}{3}(d^{3}+3d) $, $4d+\frac{2}{3}(d^{3}+3d)u$, $-4d-%
\frac{2}{3}(d^{3}+3d)u$, and so has $3p-3$ eigenvectors.

It follows that if $p=1\mod{12}$ and if there are no solutions to the
equations $d^{4}+6d^{2}-3=0$ and $1-d^{2}+de^{2}=0$, or if $p=5\mod{12}$%
, then the number of orbits (i.e. the number of descendants of $L_{p}$ of
dimension 10) is%
\[
\frac{4(p-1)+(p^{3}-1)+(p^{2}+p-2)+2(3p-3)}{4(p-1)}=\frac{(p+1)^{2}}{4}+3. 
\]%
If $p=1\mod{12}$ and if there are solutions to the equations $%
d^{4}+6d^{2}-3=0$ and $1-d^{2}+de^{2}=0$, then the number of orbits is%
\begin{eqnarray*}
&&\frac{36(p-1)+(p^{3}-1)+(p^{2}+p-2)+2(3p-3)+8(p^{2}+p-2)+24(3p-3)}{36(p-1)}
\\
&=&\frac{(p-1)^{2}}{36}+\frac{p-1}{3}+4.
\end{eqnarray*}%
If $p=7\mod{12}$ then the number of orbits is%
\[
\frac{2(p-1)+(p^{3}-1)+(p^{2}+p-2)}{2(p-1)}=\frac{(p+1)^{2}}{2}+2. 
\]%
And finally if $p=11\mod{12}$ then the number of orbits is%
\[
\frac{6(p-1)+(p^{3}-1)+3(p^{2}+p-2)+2(p-1)}{6(p-1)}=\frac{(p+1)^{2}}{6}+%
\frac{p+1}{3}+2. 
\]

This completes the proof of Theorem 1.

\end{document}